\theoremstyle{plain} \newtheorem{theorem}{Theorem}
\theoremstyle{plain} \newtheorem{lemma}{Lemma}[section]
\theoremstyle{remark} 
\theoremstyle{plain} \newtheorem{proposition}[lemma]{Proposition}
\theoremstyle{plain} \newtheorem{corollary}[lemma]{Corollary}
\theoremstyle{plain} 
\theoremstyle{definition} 
\theoremstyle{definition} 
\theoremstyle{definition} \newtheorem{step}{Step}
\newcommand{\rmthree}{\mathrm{I\hspace{-1pt}I\hspace{-1pt}I}}
\newcommand{\rmfour}{\mathrm{I\hspace{-1pt}V}}
\newcommand{\rmfive}{\mathrm{V}}
\newcommand{\rmsix}{\mathrm{V\hspace{-1pt}I}}
\newcommand{\rmseven}{\mathrm{V\hspace{-1pt}I\hspace{-1pt}I}}
\newcommand{\adim}{D}
\newcommand{\ch}{\mathrm{ch}}
\newcommand{\threealg}[3]{\rmthree({#1},{#2},{#3})}
\newcommand{\threeeven}[1]{\rmthree_{\mathrm{e}}({#1},{#1})}
\newcommand{\fourone}[2]{\rmfour_1({#1},{#2})}
\newcommand{\fourtwo}[3]{\rmfour_2({#1},{#2},{#3})}
\newcommand{\foureven}[1]{\rmfour_{\mathrm{e}}({#1},{#1})}
\newcommand{\barfourtwo}{\widehat{\rmfour}_2(2,2,\frac{1}{2})}
\newcommand{\fivethree}[1]{\rmfive_3({#1},{#1})}
\newcommand{\sixthree}[1]{\rmsix_3({#1},{#1})}
\newcommand{\sevenalg}{\rmseven(\frac{4}{3},\frac{4}{3})}
\newcommand{\ifusion}{\mathfrak{V}(4,3)}
\newcommand{\chf}[1]{\ch\,\mathbb{#1}}
\newcommand{\axdsum}[4]{\bigoplus_{#3=#1}^{#2}{#4}a_{#3}}
\newcommand{\haxdsum}[4]{\bigoplus_{#3=#1}^{#2}{#4}\hat{a}_{#3}}
\newcommand{\spanby}[2]{\mathrm{Span}_{#1}\{{#2}\}}
\newcommand{\algenei}[1]{\langle a_0,a_{#1}\rangle_{alg}}
\title{On primitive axial decomposition algebras of Majorana type with degenerate eigenvalues}
\author{Takahiro Yabe\thanks{
Graduate School of Mathematical Sciences, The University of Tokyo
3-8-1, Komaba, Meguro-ku, Tokyo 153-8914, Japan, email: tyabe@ms.u-tokyo.ac.jp}}
\date{}
\begin{document}
\maketitle

\begin{abstract}\noindent
A class of axial decomposition algebras with Miyamoto group generated by two Miyamoto automorphisms and three eigenvalues $0,1$ and $\eta$ is introduced and classified in the case with $\eta\notin\{0,1,\frac{1}{2}\}$.
This class includes specializations of 2-generated axial algebras of Majorana type $(\xi,\eta)$ to the case with $\xi=\eta$.
\end{abstract}

\section{Introduction}
In \cite{i09}, A.\ A.\ Ivanov introduced Majorana algebras by axiomatizing some properties of the 196,884 dimensional Griess algebra called the monstrous Griess algebra.
In \cite{hrs13}, J.\ I.\ Hall et al.\ introduced a class of nonassociative algebras called axial algebras, which contains Majorana algebras and many other algebras.
An axial algebra is a commutative nonassociative algebra generated by axes, a distinguished set of idempotents, subject to a condition called a \textit{fusion rule} given in terms of eigenvalues of axes.
For a fusion rule $\mathcal{F}$ and an algebra $A$, an idempotent $a$ of $A$ is called an $\mathcal{F}$-axis if its action on $A$ is semisimple and the eigenspaces obey the fusion rule $\mathcal{F}$.
An algebra generated by $\mathcal{F}$-axes is called an $\mathcal{F}$-axial algebra.
A Majorana algebra is a $\ifusion$-axial algebra over $\mathbb{R}$ with an associative, symmetric and positive definite bilinear form, where $\ifusion$ is the fusion rule with the set of eigenvalues $\{0,1,\frac{1}{4},\frac{1}{32}\}$ coming from the fusion rules of the Ising model in conformal field theory.
See \cite{hrs13} for details.

In our previous work \cite{y20}, we considered the classification of 2-generated axial algebras with the fusion rule $\mathcal{F}(\xi,\eta)$ as Table \ref{tabfrule}.
\begin{table}[h]
\begin{center}
\begin{tabular}{c|cccc}
$\star$&0&1&$\xi$&$\eta$\\ \hline
0&0&0&$\xi$&$\eta$\\ 
1&0&0&$\xi$&$\eta$\\ 
$\xi$&$\xi$&$\xi$&$\{0,1\}$&$\eta$\\ 
$\eta$&$\eta$&$\eta$&$\eta$&$\{0,1,\xi\}$
\end{tabular}
\caption{The fusion rule $\mathcal{F}(\xi,\eta)$}
\label{tabfrule}
\end{center}
\end{table}
We call an $\mathcal{F}(\xi,\eta)$-axial algebra an axial algebra \textit{of Majorana type $(\xi,\eta)$}.
A Majorana algebra is an axial algebra of Majorana type $(\frac{1}{4},\frac{1}{32})$.
Since the axes are defined by the fusion rule and eigenspaces, in that paper, we assumed that four eigenvalues $0,1,\xi$ and $\eta$ are distinct. 

The classification of \cite{y20} includes some algebras which allow specialization to $\xi=\eta$.
Although such specializations are not axial algebras, they are included in the class  \textit{decomposition algebras} introduced in \cite{dpsv19}, which generalizes axial algebras.

In this paper, we will consider a generalization of $\mathcal{F}(\xi,\eta)$-axial algebras that includes the case with $\xi=\eta$ by using that language.

Unfortunately, the specializations of $\mathcal{F}(\xi,\eta)$-axial algebras to $\xi=\eta$ are not 2-generated in general. 
However, they satisfy a condition which we call \textit{dihedral}.
That is, the algebra $A$ is generated by a sequence of axes $\{a_i\}_{i\in\mathbb{Z}}$ such that the Miyamoto involution of $a_0$ sends $a_i$ to $a_{-i}$ and there exists an automorphism of $A$ which sends $a_i$ to $a_{i+1}$.

The purpose of this paper is to classify dihedral primitive axial decomposition algebras of Majorana type.
When $\xi\neq\eta$, the condition dihedral is equivalent to being 2-generated and having an automorphism called flip, which switches $a_0$ and $a_1$.
So the classification for $\xi\neq\eta$ reduces to the results of our previous work \cite{y20} if $\chf{\mathbb{F}}\neq5$ and to \cite{fm21} and \cite{y20} if $\chf{\mathbb{F}}=5$.
The main result of this paper is the classification of dihedral axial decomposition algebras of Majorana type $(\eta,\eta)$ with $\eta\neq\frac{1}{2}$.

In Section 2, we introduce dihedral axial decomposition algebras of Majorana type $(\eta,\eta)$ and study their fundamental properties.
In Section 3, we give a list of dihedral axial decomposition algebras of Majorana type $(\eta,\eta)$.
In Section 4, we state our main theorem that the dihedral axial decomposition algebras of Majorana type $(\eta,\eta)$ with $\eta\neq\frac{1}{2}$ is isomorphic to one of the algebras listed in Section 3.
We also give an outline of the proof.
Finally, we give the details of proofs in Section 5. 

\section{Preliminaries}
Let $\mathbb{F}$ be a field such that $\chf{\mathbb{F}}\neq2$ and $M$ a commutative nonassociative algebra over $\mathbb{F}$.

Take $I=\{0,1,2,3\}$ as an index set of four elements and let $\Phi$ be a function from $I$ to $\mathbb{F}$ such that $\Phi(0)=0$, $\Phi(1)=1$.
We call $a\in M$ a (primitive) \textit{$\mathcal{F}$-axis (with parameters $\Phi$)} if there exists a decomposition $M=\bigoplus_{i=0}^3M^i(a)$ such that
\begin{itemize}
\item[(1)]For all $i\in\{0,1,2,3\}$ and $x\in M^i(a)$, $xa=\Phi(i)x$,
\item[(2)]$M^1(a)=\mathbb{F} a$,
\item[(3)]$M^0(a)M^i(a)\subset M^i(a)$ for all $i$, $M^2(a)M^2(a)\subset M^0(a)\oplus M^1(a)$, $M^2(a)M^3(a)\subset M^3(a)$ and $M^3(a)M^3(a)\subset\bigoplus_{i=0}^2M^i(a)$.
We call this fusion rule $\mathcal{F}$.
\end{itemize}
We simply call an $\mathcal{F}$-axis an axis.
We call $M$ an \textit{axial $(\mathcal{F},\Phi)$-decomposition algebra} if there exists an $\mathcal{F}$-axis.
(See \cite{dpsv19} for more general axiomatization.)

For an axis $a$, there exists an automorphism $\tau(a)$ of M such that $\tau(a)(x)=x$ if $x\in\bigoplus_{i=0}^2M^i(a)$ and $\tau(a)(x)=-x$ if $x\in M^3(a)$.
This automorphism is called the Miyamoto involution of $a$.

In \cite{r14} or \cite{y20}, the classification of axial $(\mathcal{F},\Phi)$-decomposition algebra generated by two axes are considered in the case when $\Phi$ is an injection.
By using the language of axial decomposition algebra, we generalize such classification for the case with $\Phi(2)=\Phi(3)$.

In fact, the assumption that $M$ is 2-generated is not appropriate for such generalization since the following proposition holds.

\begin{proposition}\sl
Let $\hat{M}$ be a commutative nonassosiative algebra over $R=\mathbb{F}[x_1,\ldots x_n]$, $\xi$ and $\eta$ maps from $\mathbb{F}^n$ to $\mathbb{F}$, $I$ a subset of $\hat{M}$ such that $\hat{M}=\bigoplus_{i\in I}Ri$ and $l,m\in I$.

For $x\in\mathbb{F}^n$, let an algebra $M(x)$ be the $\mathbb{F}$-space  $M=\bigoplus_{i\in I}\mathbb{F}a_i$ with $a_ja_k=\sum_{i\in I}f_i(x)i$ for all $i,j\in I$, where $f_i\in R$ satisfies $jk=\sum_{i\in I}f_ii$.

Assume that  if $0,1,\xi(x)$ and $\eta(x)$ are distinct, then $M(x)$ is an axial $(\mathcal{F},\Phi)$-decomposition algebra generated by two axes $a_l$ and $a_m$.

If $\xi(x)=\eta(x)$ and $|I|>3$, then $M(x)$ is an axial decomposition algebra but not generated by $a_l$ and $a_m$.
\end{proposition}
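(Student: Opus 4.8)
The proposition has two halves: that $M(x)$ is an axial $(\mathcal F,\Phi)$-decomposition algebra when $\xi(x)=\eta(x)$, and that it is then not generated by $a_l$ and $a_m$ once $|I|>3$; the second is the substantive one. For the common setup, write $\mathrm{ad}_c$ for the operator $y\mapsto yc$ on $M(x)$. Since the $f_i\in R$ are polynomials, the characteristic polynomial of $\mathrm{ad}_{a_l}$ is polynomial in $x$ and agrees, on the dense locus where $0,1,\xi(x),\eta(x)$ are distinct, with $t^{n_0}(t-1)(t-\xi(x))^{n_2}(t-\eta(x))^{n_3}$ (where $n_i$ is the generic value of $\dim M^i$), hence at our $x$ it is $t^{n_0}(t-1)(t-\eta(x))^{n_2+n_3}$. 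I restrict throughout to the open set where $\xi(x),\eta(x)\notin\{0,1\}$; put $R'=R[(\xi\eta(\xi-1)(\eta-1))^{-1}]$ and $K=\mathrm{Frac}(R)$.

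For the first half I would build the decomposition of $a_l$ at $x$ by limits. The Lagrange projectors $P^0,P^1$ of $\mathrm{ad}_{a_l}$ onto the eigenvalues $0$ and $1$, and $P^{\ge2}:=1-P^0-P^1$, have denominators $\xi\eta$, $(1-\xi)(1-\eta)$ and their product, with no factor $\xi-\eta$; so they are defined over $R'$, stay orthogonal idempotents summing to $1$ after specialising at $x$, and give $M(x)=M^0(a_l)\oplus\mathbb F a_l\oplus W$ with $W=\mathrm{Im}\,P^{\ge2}(a_l)$ the generalised $\eta(x)$-eigenspace (that $\mathbb F a_l$ is exactly the $1$-eigenspace follows from the shape of the characteristic polynomial above). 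The remaining datum is a refinement $W=M^2(a_l)\oplus M^3(a_l)$ obeying the inclusions of axiom~(3). Generically this is given by $P^2,P^3$, which do carry $\xi-\eta$ in their denominators; but $P^3(a_l)=\frac{1}{\eta(\eta-1)(\eta-\xi)}(\mathrm{ad}_{a_l})(\mathrm{ad}_{a_l}-1)(\mathrm{ad}_{a_l}-\xi)$ extends across $\{\xi=\eta\}$ exactly when $\xi-\eta$ divides the $R'$-operator $(\mathrm{ad}_{a_l})(\mathrm{ad}_{a_l}-1)(\mathrm{ad}_{a_l}-\xi)$, i.e. exactly when $\mathrm{ad}_{a_l}$ is semisimple at $x$; and when it does, its limit $\Pi:=\lim P^3(a_l)$ is an idempotent with $(\mathrm{ad}_{a_l}-\eta(x))\Pi=0$ and $\mathrm{tr}\,\Pi=n_3$, so $M^3(a_l):=\mathrm{Im}\,\Pi$ and $M^2(a_l):=\mathrm{Im}(P^{\ge2}(a_l)-\Pi)$ split $W$ and inherit the fusion inclusions by continuity. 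Thus the first half reduces to one point: $\mathrm{ad}_{a_l}$ has no Jordan block at $x$, i.e. its minimal polynomial acquires no repeated root as $\xi\to\eta$. For the explicit families of Section~3 this is a finite verification.

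For the second half, assume for contradiction that $M(x)=\langle a_l,a_m\rangle$ and set $\hat A=\langle l,m\rangle\subseteq\hat M$. By the hypothesis, the specialisation of $\hat A$ is onto $M(x')$ for every $x'$ with the four values distinct, so $\hat A$ has full rank $|I|$ in $\hat M$, the torsion $R$-module $\hat M/\hat A$ is supported on a proper closed subset $Z$, and by Nakayama at the maximal ideal $\mathfrak m_x$ one has $M(x)=\langle a_l,a_m\rangle$ if and only if $x\notin Z$; so it suffices to show $\{\xi=\eta\}\subseteq Z$. Granting the first half — so that $\mathrm{ad}_{a_l}$ is semisimple at $x$, whence the Miyamoto involution $\tau(a_l)=1-2P^3(a_l)$ extends to an automorphism of $M(x)$ and the element $v:=P^3(a_l)(m)$, which lies in $\hat A\otimes_R K$ on the generic locus, becomes a genuine element of $\hat M_{\mathfrak m_x}$ — the point is that $v=\frac1{\xi-\eta}w$ for some $w\in\hat M\otimes R'$, so it is enough to show $v\notin\hat A_{\mathfrak m_x}$, which forces $\hat A_{\mathfrak m_x}\neq\hat M_{\mathfrak m_x}$, i.e. $x\in Z$. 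Here $|I|>3$ enters essentially: when $|I|=3$ one has $M(x')=\mathrm{span}\{a_l,a_m,a_la_m\}$ on the generic locus, every basis vector is a product of $a_l$ and $a_m$ with no $\xi-\eta$ involved, and there is nothing to obstruct; when $|I|>3$ there is at least one extra basis direction, and in each algebra of the classification of $2$-generated decomposition algebras with injective $\Phi$ (\cite{y20}, \cite{r14}) it is realised by such a $v$, whose expression in the two axes genuinely has $\xi-\eta$ in the denominator. This gives the contradiction, so $M(x)\neq\langle a_l,a_m\rangle$.

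The step I expect to be the main obstacle is this last claim of the second half: proving rigorously that the $\xi-\eta$ in the denominator of $P^3(a_l)$ cannot be cancelled by also passing through multiplications by $a_m$, i.e. that $v\notin\hat A_{\mathfrak m_x}$. I would attack it either by inspecting the finitely many families of the classification directly, or — aiming for a uniform argument — by exhibiting a linear functional on $M(x)$ that annihilates every product of $a_l$ and $a_m$ but not $v$, or by comparing the ranks of $\mathrm{ad}_{a_l}$ and $\mathrm{ad}_{a_m}$ on $\langle a_l,a_m\rangle$ at $x$ with their generic values, perhaps after first proving that a $2$-generated primitive $(\mathcal F,\Phi)$-decomposition algebra with $\Phi(2)=\Phi(3)$ must have dimension at most $3$.
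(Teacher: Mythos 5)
There is a genuine gap: the heart of the proposition --- that $\langle a_l,a_m\rangle_{alg}\subsetneq M(x)$ when $\xi(x)=\eta(x)$ and $|I|>3$ --- is exactly the step you defer. Your Nakayama/support framework correctly reduces the question to showing $\{\xi=\eta\}\subseteq Z$, but the only route you offer to that inclusion is the claim that some element $v=P^3(a_l)(m)$ carries an uncancellable $\xi-\eta$ in its denominator, which you explicitly flag as ``the main obstacle'' and do not establish (and which, as stated, would require either a case-by-case appeal to the classification of the injective-$\Phi$ algebras or a uniform argument you have not supplied). So the proposal does not prove the statement.

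The missing uniform argument is short and is the last of the alternatives you list in passing: when $\Phi(2)=\Phi(3)$, a primitive $2$-generated subalgebra has dimension at most $3$. Concretely, write $a_m=\lambda a_l+u+z$ with $u\in M^2(a_l)\oplus M^3(a_l)$ and $z\in M^0(a_l)$; since both summands of $u$ now have the same eigenvalue $\eta$, the element $p=a_la_m-\eta(a_l+a_m)$ collapses to $(\lambda(1-\eta)-\eta)a_l-\eta z\in\mathbb{F}a_l\oplus M^0(a_l)$, so $a_lp=\pi a_l$ for a scalar $\pi$ (and symmetrically for $a_m$), and a fusion-rule computation of the form $(a_la_m)(p-\pi a_l)=a_l(a_m(p-\pi a_l))$ gives $p^2=\pi p$. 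Hence $\mathrm{Span}_{\mathbb{F}}\{a_l,a_m,p\}$ is closed under multiplication and equals $\langle a_l,a_m\rangle_{alg}$, which therefore has dimension at most $3<|I|=\dim M(x)$. This makes the entire apparatus of generic projectors, $R'$-integrality and support loci unnecessary for the second half. (Your first half --- constructing the four-part decomposition at $\xi=\eta$ by taking limits of the Lagrange projectors and reducing to semisimplicity of $\mathrm{ad}_{a_l}$ --- is a reasonable sketch but is likewise not completed: you leave the no-Jordan-block condition as a ``finite verification'' over families the proposition does not actually restrict to.)
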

\begin{proof}
Let $p=a_la_m-\eta(x)(a_l+a_m)$.
Then by a similar calculation as \cite{hrs13} or \cite{y20},
$a_jp=\pi a_0$
for some $\pi\in\mathbb{F}$ and $j\in\{l,m\}$.
Then, since $(a_la_m)(p-\pi a_l)=a_l(a_m(p-\pi a_l))$ by the fusion rule, $pp=\pi p$.
So $\langle a_l,a_m \rangle_{alg}$ is 3-dimensional and thus the proposition holds.
\end{proof}

We call $M$ an \textit{axial decomposition algebra of Majorana type} $(\xi,\eta)$ if there exists a $\Phi$-axis with $\Phi(2)=\xi$ and $\Phi(3)=\eta$.
Moreover, we call $M$ \textit{dihedral} if it is equipped with a sequence of $\Phi$-axes $(a_i)_{i\in\mathbb{Z}}$ such that

\begin{itemize} 
\item[(D1)] $M$ is generated by $(a_i)_{i\in\mathbb{Z}}$.
\item[(D2)] $a_i\mapsto a_{i+1}$ gives an automorphisms of $M$.
\item[(D3)] $a_i\mapsto a_{2j-i}$ gives the Miyamoto involution of $a_j$ for all $j\in\mathbb{Z}$.
\end{itemize}

For the rest of this paper, we assume that $\xi,\eta\notin\{0.1\}$.

\begin{proposition}\sl
Let $M$ be an axial decomposition algebra of Majorana type $(\xi,\eta)$ with $\xi\neq\eta\neq0$.
Then $M$ is dihedral if and only if  $M$ is generated by 2 axes and there exists an automorphism which switches 2 generating axes.
\end{proposition}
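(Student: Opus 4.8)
The plan is to prove both implications. For the forward direction, suppose $M$ is dihedral with axes $(a_i)_{i\in\mathbb{Z}}$. The first task is to show that the subalgebra $\langle a_0,a_1\rangle_{alg}$ is all of $M$; by (D1) it suffices to show that every $a_i$ lies in this subalgebra. Let $\sigma$ denote the automorphism $a_i\mapsto a_{i+1}$ from (D2) and $\tau_j$ the Miyamoto involution $a_i\mapsto a_{2j-i}$ from (D3). Composing, $\tau_1\tau_0$ sends $a_i\mapsto a_{i+2}$, so it agrees with $\sigma^2$ on the generators and hence $\sigma^2=\tau_1\tau_0$ as automorphisms of $M$. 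Since $\tau_0$ and $\tau_1$ stabilize $\langle a_0,a_1\rangle_{alg}$ (because $\tau_0$ fixes $a_0$ and sends $a_1\mapsto a_{-1}$; here I would first need to check $a_{-1}\in\langle a_0,a_1\rangle_{alg}$, which follows once we know $\langle a_0,a_1\rangle_{alg}$ is closed under $\tau_0$—so this must be set up carefully, e.g.\ by the standard argument that $a_{-1}=\tau_0(a_1)$ and $\tau_0$ restricted to the subalgebra generated by $a_0,a_1$ is the Miyamoto involution of $a_0$ acting there, which one checks using that the eigenspace decomposition of $a_0$ restricts), it follows that $\sigma^2$ stabilizes $\langle a_0,a_1\rangle_{alg}$, and then inductively all $a_{2k}$ and $a_{2k+1}$ lie in it. Finally $\tau_0$ itself is an automorphism of $M$ switching $a_0$ and $a_1$? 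No—$\tau_0$ fixes $a_0$; instead the required flip is $\tau_0\circ\sigma$ or, more directly, the composition that sends $a_0\mapsto a_1$ and $a_1\mapsto a_0$: one checks $\tau_0\sigma^{-1}$ (or an appropriate word in $\sigma,\tau_0$) does this on generators, hence on $M$.

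For the converse, suppose $M=\langle b_0,b_1\rangle_{alg}$ is $2$-generated by axes $b_0,b_1$ and admits an automorphism $f$ with $f(b_0)=b_1$, $f(b_1)=b_0$. Set $\rho=\tau(b_0)$, the Miyamoto involution of $b_0$, and define $g=f\rho$, an automorphism of $M$. Then define $a_0=b_0$, $a_1=b_1$, and $a_{i+1}=g(a_i)$ for all $i\in\mathbb{Z}$ (using $g^{-1}$ for negative indices). By construction (D1) holds since $a_0,a_1$ already generate, and (D2) holds by definition of $g$, once one checks $g(a_0)=a_1$, i.e.\ $f\rho(b_0)=f(b_0)=b_1$, which is immediate since $\rho$ fixes its own axis $b_0$. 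The content is (D3): one must verify that $a_i\mapsto a_{2j-i}$ is the Miyamoto involution $\tau(a_j)$ for every $j$. For $j=0$ this says $\tau(a_0)=\tau(b_0)=\rho$ sends $a_i\mapsto a_{-i}$; since $a_i=g^i(b_0)$ and conjugation gives $\rho g\rho = ?$, the key identity to establish is $\rho g\rho^{-1}=g^{-1}$, i.e.\ that $\rho$ inverts $g$. This reduces to $\rho f\rho \cdot \rho\rho\cdots$—more precisely to showing $\rho f\rho = f^{-1}$ conjugated appropriately, which in the $\xi\neq\eta$ setting is the standard fact (as in \cite{y20}) that the Miyamoto involutions and the flip generate a dihedral group. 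For general $j$, (D3) then follows by transporting the $j=0$ case along $g^j$: since $g$ is an automorphism, $g^j\tau(a_0)g^{-j}=\tau(g^j(a_0))=\tau(a_j)$, and one computes the action of the left side on $a_i$ to be $a_i\mapsto a_{2j-i}$.

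The main obstacle is the converse direction, specifically establishing the dihedral relation $\rho g \rho^{-1}=g^{-1}$ (equivalently that $\rho$ and the shift $g$ generate an infinite dihedral group, or a finite quotient). This is where the hypothesis $\xi\neq\eta$ enters essentially: it guarantees that $a_0$ is a genuine $\mathcal{F}$-axis with four distinct eigenvalues, so that $\tau(a_0)$ is determined purely by the $\eta$-eigenspace and the standard computations showing $\tau(b_0)$ interacts correctly with the flip $f$ go through. Concretely I would argue: $\rho f$ and $f\rho$ are both automorphisms; since $f$ swaps $b_0,b_1$ and $\rho$ fixes $b_0$ while sending $b_1$ to $\tau(b_0)(b_1)$, one checks on the two generators $b_0,b_1$ that $\rho f \rho f=\mathrm{id}$ (using that $f\tau(b_0)f^{-1}=\tau(f(b_0))=\tau(b_1)$ and the relation between $\tau(b_0),\tau(b_1)$ from the $2$-generated Majorana-type structure), whence $(f\rho)^2=\rho f\rho f$ has the desired form and $\rho$ inverts $g=f\rho$. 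Everything else—closure of subalgebras under Miyamoto involutions, the induction on indices—is routine and parallels \cite{y20}.
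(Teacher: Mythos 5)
Your proposal is correct and, at its core, takes the same route as the paper. The crux in both is the forward direction: showing every $a_j$ lies in $\langle a_0,a_1\rangle_{alg}$. You do this by observing that the decomposition $M=\bigoplus_i M^i(a_0)$ restricts to any subalgebra containing $a_0$ (the eigenprojections are polynomials in multiplication by $a_0$, using that $0,1,\xi,\eta$ are pairwise distinct), so $\tau_0$ and $\tau_1$ preserve $\langle a_0,a_1\rangle_{alg}$ and the shift by $2$ does too; the paper writes the very same projection out explicitly, computing the $M^3(a_i)$-component of $a_j$ as $y=\frac{1}{\eta(\eta-\xi)}(a_i(a_ia_j)-\xi a_ia_j+\lambda(\xi-1)a_i)$ and noting $y=\frac{1}{2}(a_j-a_{2i-j})$, which is exactly where $\xi\neq\eta\neq0$ enters. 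One local slip in your converse: the claim $\rho f\rho f=\mathrm{id}$ is false in general, since $\rho f\rho f=(\rho f)^2=g^{-2}$ is the shift by $-2$; but it is also unnecessary. Because $f$ switches the two generators of a $2$-generated algebra, $f^2$ fixes them and hence $f^2=\mathrm{id}$, and then $\rho(f\rho)\rho^{-1}=\rho f=(f\rho)^{-1}$ holds purely formally for the two involutions $f,\rho$, with no input from the fusion rule; (D3) for general $j$ then follows by conjugating by $g^j$ exactly as you say. The paper leaves this converse direction as an assertion, so your more explicit (and, once repaired, trivial) verification is harmless.
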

\begin{proof}
For two axes $a_0$ and $a_1$ of $M$, there exists a sequence of axes satisfying the condition (D2) and (D3) above if there exists an automorphism switching $a_0$ and $a_1$.

Furthermore, if a sequence of axes $(a_i)_{i\in\mathbb{Z}}$ satisfies (D2) and (D3), then there exists an automorphism switching $a_0$ and $a_1$.

So it suffices to verify that for a sequence $(a_i)_{i\in\mathbb{Z}}$ of axes which satisfies (D2) and (D3), then $a_j\in\langle a_0,a_1\rangle_{alg}$ for all $j$.
This claim is shown if it is proved that $a_{2i-j}\in\langle a_i,a_j\rangle_{alg}$ for all $i,j\in\mathbb{Z}$.

For $i,j\in\mathbb{Z}$, there exists $\lambda\in\mathbb{F}$, $x\in M^2(a_i)$, $y\in M^3(a_i)$ and $z\in M^0(a_i)$ such that
$$a_j=\lambda a_i+x+y+z.$$
Then
$$a_ia_j=\lambda a_i+\xi x+\eta y$$
and
$$a_i(a_ia_j)=\lambda a_i+\xi^2x+\eta^2 y.$$
So 
$$y=\frac{1}{\eta(\eta-\xi)}(a_i(a_ia_j)-\xi a_ia_j+\lambda(\xi-1)a_i)\in\langle a_i,a_j\rangle_{alg}$$
 since $\xi\neq\eta\neq0$.
Since $y=\frac{1}{2}(a_j-a_{2i-j})$ by the definition of $(a_i)_{i\in\mathbb{Z}}$, $a_{2i-j}\in\langle a_i,a_j\rangle_{alg}$.
So the proposition is proved.
\end{proof}

Assume that $M$ is a dihedral axial decomposition algebra of Majorana type $(\eta,\eta)$, $(a_i)_{i\in\mathbb{Z}}$ are generating axes and $\eta\neq\frac{1}{2}$.

Let $f_i$ be an automorphism of $M$ such that $f_i(a_j)=a_{i+j}$, $\tau_i$ the Miyamoto involution of $a_i$, $G=\langle f_1, \tau_0\rangle$ and $\theta=f_1\circ\tau_0\in G$.

We call the dimension of $\spanby{\mathbb{F}}{a_i}_{i\in\mathbb{Z}}$ the \textit{axial dimension} of $M$ and denote it by $\adim$.

It is shown in \cite{y20} that if $\adim$ is finite, then $M$ satisfies exactly one of the following properties:
\begin{itemize}
\item[(1)] $\adim$ is even and there exists a sequence $(\alpha_i)_{0\leq i\leq k}$ of elements of $\mathbb{F}$ such that $\alpha_k\neq0$ and $\sum_{i=1}^k\alpha_i(a_i+a_{-i})+\alpha_0 a_0=0$ with $\adim=2k$
\item[(2)] There exist $k\in\mathbb{Z}$ and a sequence $(\alpha_i)_{1\leq i\leq k}$ of elements of $\mathbb{F}$ such that $\adim=2k$, $\alpha_k\neq0$ and $\sum_{i=1}^k\alpha_i(a_i-a_{-i})=0$.
\item[(3)] There exist $k\in\mathbb{Z}$ and a sequence $(\alpha_i)_{0\leq i\leq k}$ of elements of $\mathbb{F}$ such that $\adim=2k+1$, $\alpha_k\neq0$ and $\sum_{i=0}^k\alpha_i(a_{i+1}+a_{-i})=0$.
\item[(4)] There exist $k\in\mathbb{Z}$ and a sequence $(\alpha_i)_{0\leq i\leq k}$ of elements of $\mathbb{F}$ such that $\adim=2k+1$, $\alpha_k\neq0$ and $\sum_{i=0}^k\alpha_i(a_{i+1}-a_{-i})=0$.
\end{itemize}
We say that $M$ satisfies an \textit{even relation} if (1) or (3) holds and $M$ satisfies an \textit{odd relation} if (2) or (4) holds.

The following lemmas on linear relations among axes will be used later.

\begin{lemma}\label{leminv1}\sl
Let $x,y$ be elements of $M$ such that
\begin{itemize}
\item[\rm{(i)}]$f_2(x)=-\tau_0(x)$, $f_3(x)=x$ and $x+f_1(x)+f_2(x)=0$,
\item[\rm{(ii)}]$f_1(y)=-y$ and $\tau_0(y)=y$.
\end{itemize}
If there exist $\alpha_i\in\mathbb{F}$ for $0\leq i\leq k$ such that $x+y+\sum_{i=0}^k\alpha_i(a_{i+1}-a_{-i})=0$, the following statements hold:
\begin{itemize}
\item[\rm{(1)}]$\alpha_k(a_{k+2}-a_{-k-2})+\sum_{i=1}^{k+1}(\alpha_{i+1}+2\alpha_i+2\alpha_{i-1}+\alpha_{i-2})(a_{i}-a_{-i})=0$, where $\alpha_{k+1}=\alpha_{k+2}=0$ and $\alpha_{-1}=-\alpha_{0}$.
Moreover, in the case when $\alpha_k\neq0$, $\adim\leq2k+4$ and if $\adim=2k+2$, odd relation holds.
\item[\rm{(2)}]If $x=0$, then $\alpha_k(a_{k+1}-a_{-k-1})+\sum_{i=1}^k(\alpha_i+\alpha_{i-1})(a_{i}-a_{-i})=0$.
Moreover, in the case when $\alpha_k\neq0$, $\adim\leq2k+2$ and if $\adim=2k+2$, odd relation holds.
\end{itemize}
\end{lemma}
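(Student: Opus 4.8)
The idea is to turn the hypothesised relation into a relation among the axes $a_i$ alone by applying the automorphisms $f_i$ and $\tau_0$ to it and using the transformation laws in (i) and (ii); the assertions on $\adim$ then follow from the classification of linear relations among the $a_i$ recalled above.

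First I would apply $1+f_1$ to the given relation. Using $(1+f_1)(y)=y+f_1(y)=0$ from (ii) and $(1+f_1)(x)=x+f_1(x)=-f_2(x)$ from (i), this yields $f_2(x)=(1+f_1)\bigl(\sum_{i=0}^k\alpha_i(a_{i+1}-a_{-i})\bigr)$; applying $f_{-1}$ and regrouping the terms gives
\[
f_1(x)=\sum_{i=0}^k\alpha_i\bigl((a_i-a_{-i})+(a_{i+1}-a_{-i-1})\bigr)
       =\alpha_k(a_{k+1}-a_{-k-1})+\sum_{i=1}^{k}(\alpha_i+\alpha_{i-1})(a_i-a_{-i}).
\]
When $x=0$ the left-hand side vanishes, and this is precisely the relation asserted in (2). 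For (1) I would apply $f_{-1}+1+f_1$ to the identity just displayed: on the left, $(f_{-1}+1+f_1)(f_1(x))=x+f_1(x)+f_2(x)=0$ by (i); on the right, a direct computation shows that $f_{-1}+1+f_1$ carries $\sum_j\beta_j(a_j-a_{-j})$ to $\sum_j(\beta_{j-1}+\beta_j+\beta_{j+1})(a_j-a_{-j})$. Taking $\beta_j=\alpha_j+\alpha_{j-1}$ — with $\alpha_{k+1}=\alpha_{k+2}=0$, and $\alpha_{-1}=-\alpha_0$, which is exactly the value making $\beta_0=0$ so that no $a_0$-term survives — the coefficient of $a_j-a_{-j}$ becomes $\alpha_{j+1}+2\alpha_j+2\alpha_{j-1}+\alpha_{j-2}$ and the only leftover top term is $\alpha_k(a_{k+2}-a_{-k-2})$; this is the relation asserted in (1).

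For the remaining assertions: a nontrivial relation $\sum_{i=1}^{n}\delta_i(a_i-a_{-i})=0$ with $\delta_n\neq0$ lets one solve for $a_n$ in terms of $a_{-n},\dots,a_{n-1}$ and, translating by $f_{\pm1}$, shows that $2n$ consecutive axes span $\saxes$, so $\adim\le 2n$; this gives $\adim\le2k+2$ in (2) (take $n=k+1$) and $\adim\le2k+4$ in (1) (take $n=k+2$). When $\adim=2k+2$ the relation obtained in (2) is literally of the form required in case (2), so an odd relation holds. For (1) in the borderline case $\adim=2k+2$ — strictly below the bound $2k+4$ — one additionally brings in the hypotheses left unused so far, chiefly $f_3(x)=x$, which forces enough further collapsing (in effect reducing to the case $x=0$ above) that the classification again produces an odd relation.

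I expect the genuinely delicate point to be the boundary bookkeeping in (1): tracking which coefficients survive, why the $a_0$-terms drop out (hence the conventions $\alpha_{k+1}=\alpha_{k+2}=0$ and $\alpha_{-1}=-\alpha_0$), and the length count separating $\adim=2k+2$ from $\adim=2k+4$. The conceptual core is just the two-step annihilation: $1+f_1$, matched to $f_1(y)=-y$, to express $f_2(x)$ as an explicit combination of axes, and then the ``period-$3$'' operator $f_{-1}+1+f_1$, matched to $x+f_1(x)+f_2(x)=0$, to kill it.
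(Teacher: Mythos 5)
Your derivation of the two displayed identities is correct and coincides with the paper's own argument: the paper's first step applies $1-\tau_0$, which on data satisfying (i)--(ii) acts exactly as your $f_{-1}\circ(1+f_1)$ (both send $x\mapsto -f_1(x)$, $y\mapsto 0$ and $a_{i+1}-a_{-i}\mapsto (a_{i+1}-a_{-i-1})+(a_i-a_{-i})$), and the second step is the same operator $1+f_1+f_{-1}$ annihilating $f_1(x)$ via $x+f_1(x)+f_2(x)=0$, with the same boundary conventions $\alpha_{k+1}=\alpha_{k+2}=0$, $\alpha_{-1}=-\alpha_0$. The ``moreover'' clauses are likewise left unargued in the paper, and your span argument for the bounds on $\adim$ and the $\tau_0$-parity argument for (2) are sound; the one inaccuracy is your appeal to $f_3(x)=x$ as the extra leverage for the $\adim=2k+2$ claim in (1) --- applying $f_1$ to $x+f_1(x)+f_2(x)=0$ already yields $f_3(x)=x$, so that hypothesis is redundant and cannot supply the additional collapsing; that claim instead has to come from comparing the derived ($\tau_0$-antisymmetric) relation with the unique $\tau_0$-symmetric relation that an even case would force.
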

\begin{proof}
Since
\begin{eqnarray*}
0&=&x+y+\sum_{i=0}^k\alpha_i(a_{i+1}-a_{-i})-\tau_0(x+y+\sum_{i=0}^k\alpha_i(a_{i+1}-a_{-i}))\\
&=&-f_1(x)+\alpha_k(a_{k+1}-a_{-k-1})+\sum_{i=1}^k(\alpha_i+\alpha_{i-1})(a_{i}-a_{-i}),
\end{eqnarray*}
(2) holds if $x=0$, and
\begin{eqnarray*}
0&=&-f_1(x)+\alpha_k(a_{k+1}-a_{-k-1})+\sum_{i=1}^k(\alpha_i+\alpha_{i-1})(a_{i}-a_{-i})\\
&&+f_1(-f_1(x)+\alpha_k(a_{k+1}-a_{-k-1})+\sum_{i=1}^k(\alpha_i+\alpha_{i-1})(a_{i}-a_{-i}))\\
&&+f_{-1}(-f_1(x)+\alpha_k(a_{k+1}-a_{-k-1})+\sum_{i=1}^k(\alpha_i+\alpha_{i-1})(a_{i}-a_{-i}))\\
&=&\alpha_k(a_{k+2}-a_{-k-2})+\sum_{i=1}^{k+1}(\alpha_{i+1}+2\alpha_i+2\alpha_{i-1}+\alpha_{i-2})(a_{i}-a_{-i}),
\end{eqnarray*}
implies (1).
\end{proof}

\begin{lemma}\label{leminv}\sl
Let $x,y,z$ be elements of $M$ such that
\begin{itemize}
\item[\rm{(i)}] $g(x)=x$ for all $g\in G$,
\item[\rm{(ii)}] $f_3(y)=y$, $\tau_0\circ f_1(y)=f_2(y)$ and $\tau_0(y)=y$,
\item[\rm{(iii)}] $\tau_0(z)=z$ and $\tau_0\circ f_1(z)=f_1(z)$,
\item[\rm{(iv)}] $x+y+z+\sum_{i=1}^k\alpha_i(a_i+a_{-i})+\alpha_0a_0=0$ for some $\alpha_i\in\mathbb{F}$.
\end{itemize}
Then the following statements hold.
\begin{itemize}
\item[\rm{(1)}]$\alpha_k(a_{k+2}-a_{-k-2})+\sum_{i=1}^{k+1}(\alpha_{i-2}+\alpha_{i-1}-\alpha_{i+1}-\alpha_{i+2})(a_i-a_{i})=0$ where $a_i=0$ for $i>k$ and $\alpha_{-1}=\alpha_1$.
Thus in the case when $\alpha_k\neq0$, $\adim\leq2k+4$ and if $\adim=2k+4$, then an odd relation holds.
\item[\rm{(2)}]If $y=0$, then $\alpha_k(a_{k+1}-a_{-k-1})+\sum_{i=1}^{k}(\alpha_{i-1}-\alpha_{i+1})(a_i-a_{-i})=0$.
Thus in the case when $\alpha_k\neq0$, $\adim\leq2k+2$ and if $\adim=2k+2$, then an odd relation holds.
\item[\rm{(3)}]If $y=z=0$, then $\alpha_k(a_{k+1}-a_{-k})+\sum_{i=1}^{k-1}(\alpha_i-\alpha_{i+1})(a_{i+1}-a_{-i})=0$.
Thus in the case when $\alpha_k\neq0$, $\adim\leq2k+1$ and if $\adim=2k+1$, then an odd relation holds.
\end{itemize}
\end{lemma}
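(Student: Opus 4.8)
The plan is to mimic the proof of Lemma~\ref{leminv1}: apply a suitably chosen element of the group algebra $\mathbb{F}[G]$ to the relation in~(iv) that annihilates the summands $x$, $y$ and $z$ simultaneously, and then read off the surviving linear relation among the axes. This is possible because (i)--(iii) force $x$, $y$ and $z$ to have small $G$-orbits.

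First I would record the orbit information. By (i), $f_m(x)=x$ for all $m\in\mathbb{Z}$. Applying $f_{-1}$ to the identity $\tau_0\circ f_1(z)=f_1(z)$ from (iii) yields $\tau_{-1}(z)=z$, since $f_{-1}\circ\tau_0\circ f_1=\tau_{-1}$; together with $\tau_0(z)=z$ this shows $z$ is fixed by $\langle\tau_0,\tau_{-1}\rangle=\langle f_2,\tau_0\rangle$, so $f_2(z)=z$, the value $f_m(z)$ depends only on the parity of $m$, and in particular $f_{-1}(z)=f_1(z)$. By the first condition in (ii), $f_3(y)=y$, so $f_m(y)$ depends only on $m$ modulo $3$; in particular $f_{-1}(y)=f_2(y)$ and $f_{-2}(y)=f_1(y)$.

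For (1) I would apply $P:=f_1+f_2-f_{-1}-f_{-2}\in\mathbb{F}[G]$ to (iv). By the above, $P(x)=x+x-x-x=0$, $P(y)=f_1(y)+f_2(y)-f_2(y)-f_1(y)=0$ and $P(z)=f_1(z)+z-f_1(z)-z=0$, so $P$ annihilates $x+y+z$ and there remains $P\bigl(\sum_{i=1}^{k}\alpha_i(a_i+a_{-i})+\alpha_0 a_0\bigr)=0$. Writing $\sum_{i=1}^{k}\alpha_i(a_i+a_{-i})+\alpha_0 a_0=\sum_j\gamma_j a_j$ with $\gamma_j=\alpha_{|j|}$ for $|j|\le k$ and $\gamma_j=0$ otherwise, the coefficient of $a_l$ in $P\bigl(\sum_j\gamma_j a_j\bigr)$ is $\gamma_{l-1}+\gamma_{l-2}-\gamma_{l+1}-\gamma_{l+2}$, which is antisymmetric in $l$; pairing $a_l$ with $a_{-l}$ and re-expressing in the $\alpha_i$ (with the convention $\alpha_{-1}=\alpha_1$, as dictated by $\gamma_{-1}=\alpha_1$) gives exactly the relation of (1). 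For (2), where $y=0$, the same argument with $P:=f_1-f_{-1}$ works, since then $P(x)=0$ and $P(z)=f_1(z)-f_{-1}(z)=0$; for (3), where also $z=0$, already $P:=1-f_1$ annihilates $x$. Expanding $P\bigl(\sum_j\gamma_j a_j\bigr)=0$ and doing the bookkeeping yields the stated relation in each case.

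It remains to deduce the dimension estimates. In (1) the relation has leading term $\alpha_k(a_{k+2}-a_{-k-2})$ with $\alpha_k\neq0$; translating it by $f_1$, $f_{-1}$ and reflecting by $\tau_0$ expresses every $a_j$ in terms of $\{a_i\mid|i|\le k+2\}$, so $\saxes=\spanby{\mathbb{F}}{a_i\mid|i|\le k+2}$, and since these $2k+5$ axes satisfy one nontrivial linear relation, $\adim\le 2k+4$; cases (2) and (3) are analogous and give $\adim\le 2k+2$ and $\adim\le 2k+1$ (the relation of (3) being antisymmetric about $\tfrac12$ rather than about $0$). Finally, suppose equality holds in one of these bounds. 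By the classification of linear relations recalled in Section~2 exactly one of the four types occurs, and an even relation (type (1) or (3)) of the matching span would be symmetric — about $0$ in cases (1) and (2), about $\tfrac12$ in case (3) — hence linearly independent from the antisymmetric relation already produced; two independent relations of that span among the spanning axes would force $\adim$ to be strictly smaller, so the even types are excluded and the relation that occurs is an odd one. I expect this last point — matching the explicit antisymmetric relation to the four relation types and carrying out the span-and-independence bookkeeping — to be the only genuinely delicate step; producing the relation itself is a short computation once the right element of $\mathbb{F}[G]$ has been written down.
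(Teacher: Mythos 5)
Your proof is correct and is essentially the paper's argument: the paper derives (3) by applying $f_1-1$ to (iv) and then obtains (1) and (2) by feeding the resulting relation (with $f_1(y)-y$ and $f_1(z)-z$ in the roles of $x$ and $y$) into Lemma~\ref{leminv1}, whose proof applies $1-\tau_0$ and then $1+f_1+f_{-1}$ --- and your single operators $1-f_1$, $f_1-f_{-1}$ and $f_1+f_2-f_{-1}-f_{-2}$ are exactly what these compositions reduce to on the $\tau_0$-symmetric combination in (iv). Your explicit bookkeeping for the dimension bounds and the exclusion of even relations (symmetric versus antisymmetric relations on the same spanning set being independent) is the intended, and correct, justification of the ``moreover'' clauses that the paper leaves implicit.
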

\begin{proof}
Set $\alpha_i=0$ if $i\geq k+1$.
Since $f_1(x)=x$,
\begin{eqnarray*}
0&=&f_1(x+y+z+\sum_{i=1}^k\alpha_i(a_i+a_{-i}))-(x+y+z+\sum_{i=1}^k\alpha_i(a_i+a_{-i}))\\
&=&f_1(y)-y+f_1(z)-z+\alpha_k(a_{k+1}-a_{-k})+\sum_{i=1}^{k-1}(\alpha_i-\alpha_{i+1})(a_{i+1}-a_{-i}).
\end{eqnarray*}
Thus (3) holds if $y=z=0$

By Lemma \ref{leminv1}, (1) and (2) hold.
\end{proof}

Let $p_{i,j}=a_ja_{i+j}-\eta(a_i+a_{i+j})$ for $i,j\in\mathbb{Z}$.
Then following two properties holds by the results of \cite{y20}.
$$a_0p_{i,0}=((1-\eta)\lambda_i-\eta)a_0$$
and
$$p_{i,0}-(\lambda_i-\eta)a_0+\frac{\eta}{2}(a_i+a_{-i})\in M^2(a_0).$$
Let $x\in M$.
Moreover, for a $G$-invariant element $x$ of $M$, if $xa_0=\pi a_0$ for some $\pi\in\mathbb{F}$, then $xp_{i,j}=\pi p_{i,j}$ for all $i,j\in\mathbb{Z}$.
This property holds by the results of \cite{y20}.

\begin{proposition}\label{propmulti}\sl

The following properties hold.
\begin{itemize}
\item[\rm{(1)}]$a_0p_{2,1}=\frac{(2\eta-1)(4\lambda_1-3\eta)}{2\eta}(2p_1+\eta(a_1+a_{-1}))+\mu a_0$ for some $\mu\in\mathbb{F}$.
\item[\rm{(2)}]For some $\nu\in\mathbb{F}$,
\begin{eqnarray*}
a_0p_{3,1}&=&\frac{\eta}{2}(p_{3,1}-p_{3,-1})-\frac{(2\eta-1)(2\lambda_1-\eta)}{2\eta}(2p_{2,0}+\eta(a_2+a_{-2}))\\
&&-\frac{\mu-\eta\lambda_2+2\eta^2}{\eta}(2p_1+\eta(a_1+a_{-1}))+\frac{\nu}{2}a_0.
\end{eqnarray*}
\item[\rm{(3)}]For some $\rho\in\mathbb{F}$, 
\begin{eqnarray*}
p_{2,0}p_{2,1}&=&\frac{(2\eta-1)(4\lambda_1-3\eta)}{4}(2p_{3,0}+p_{3,1}+p_{3,-1})\\
&&+(\mu+\frac{(2\eta-1)(4\lambda_1-3\eta)(2(2\eta-1)\lambda_1-4\eta^2+\eta)}{2\eta^2})p_{2,0}\\
&&+\frac{(2\eta-1)(4\lambda_1-3\eta)(4(2\eta-1)\lambda_1-\eta\lambda_2-5\eta^2+3\eta)}{\eta^2}p_1\\
&&+\frac{(2\eta-1)^2(4\lambda_1-3\eta)(3\lambda_1-2\eta)}{2\eta}(a_2+a_{-2})\\
&&+\frac{(2\eta-1)(4\lambda_1-3\eta)(2\mu-\eta\lambda_2+2\eta^2)}{2\eta}(a_1+a_{-1})+\rho a_0.
\end{eqnarray*}
\end{itemize}
\end{proposition}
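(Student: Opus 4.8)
\emph{Proof proposal.} All three identities will come out of one block of direct computation in $M$, carried out in the eigenspace decomposition $M=M^{0}(a_{0})\oplus\mathbb{F}a_{0}\oplus M^{2}(a_{0})\oplus M^{3}(a_{0})$; since $\mu,\nu,\rho$ are only asserted to exist, in each case it suffices to pin down the component of the left-hand side transverse to $\mathbb{F}a_{0}$. Write $a_{i}=\lambda_{i}a_{0}+b_{i}+c_{i}+d_{i}$ with $b_{i}\in M^{0}(a_{0})$, $c_{i}\in M^{2}(a_{0})$ and $d_{i}=\tfrac{1}{2}(a_{i}-a_{-i})\in M^{3}(a_{0})$. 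The inputs, all available from the preceding material, are: $p_{i,j}=f_{j}(p_{i,0})$; the two displayed identities $a_{0}p_{i,0}=((1-\eta)\lambda_{i}-\eta)a_{0}$ and $p_{i,0}+\tfrac{\eta}{2}(a_{i}+a_{-i})-(\lambda_{i}-\eta)a_{0}\in M^{2}(a_{0})$, which express $b_{i}$ and $\eta c_{i}$ through $p_{i,0},a_{\pm i},a_{0}$; the Seress-type relation $(a\,y)x=a(yx)$, valid for all $y\in M$ and all $x\in M^{0}(a)\oplus\mathbb{F}a$ for an axis $a$, immediate from the fusion rule; the three-dimensionality of every two-generated $\langle a_{i},a_{j}\rangle$ (as in the proof of the first Proposition of Section~2, cf.\ \cite{hrs13,y20}), which yields $a_{j}p_{k,j}=((1-\eta)\lambda_{k}-\eta)a_{j}$, $a_{j+k}p_{k,j}=((1-\eta)\lambda_{k}-\eta)a_{j+k}$ and $p_{k,j}^{2}=((1-\eta)\lambda_{k}-\eta)p_{k,j}$; the idempotency relations $a_{i}^{2}=a_{i}$, read off componentwise relative to $a_{0}$; and the invariance principle that $xp_{i,j}=\pi p_{i,j}$ whenever $x$ is $G$-invariant with $xa_{0}=\pi a_{0}$.

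The skeleton is a bootstrap. For (1), expand $a_{0}p_{2,1}$ by substituting the $a_{0}$-decompositions of the axes occurring in $p_{2,1}$, collecting eigencomponents by the fusion rule and replacing each elementary product $a_{0}a_{k}$ by $p_{k,0}+\eta(a_{0}+a_{k})$. The components landing in $M^{2}(a_{0})$ — which include terms of the shape $b_{i}c_{j}$ and $\mathrm{proj}_{M^{2}(a_{0})}(d_{i}d_{j})$ coming from $M^{3}(a_{0})M^{3}(a_{0})\subset M^{0}(a_{0})\oplus\mathbb{F}a_{0}\oplus M^{2}(a_{0})$ — are the heart of the matter; they are collapsed onto the line $\mathbb{F}c_{1}$ using the $M^{2}(a_{0})$-part of $a_{1}^{2}=a_{1}$, namely $2\lambda_{1}\eta c_{1}+2b_{1}c_{1}+\mathrm{proj}_{M^{2}(a_{0})}(d_{1}^{2})=c_{1}$, together with $b_{1}c_{1}\in\mathbb{F}c_{1}$ (obtained by reading off eigencomponents of $a_{1}b_{1}$ inside $\langle a_{0},a_{1}\rangle$), plus the corresponding relations for the other axes and, where a cross-product cannot be reduced this way, the Seress relation or the invariance principle. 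After the collapse the transverse part lies on the line through $2p_{1}+\eta(a_{1}+a_{-1})$ and one reads off the coefficient $\tfrac{(2\eta-1)(4\lambda_{1}-3\eta)}{2\eta}$; the denominators $2\eta$ and $2\eta-1$ are exactly where $\eta\neq\tfrac{1}{2}$ is used.

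Part (2) is obtained in the same way, now also involving the axes $a_{\pm4},a_{\pm2}$ and their components; re-expressing intermediate products through $p_{2,0},p_{1,0}$ brings in the value of $a_{0}p_{2,1}$ from (1), while the portion not expressible through width-$\le2$ data is carried by $\tfrac{\eta}{2}(p_{3,1}-p_{3,-1})$. Part (3) is the product $p_{2,0}p_{2,1}$: writing $p_{2,1}=((1-\eta)\lambda_{2}-\eta)a_{1}+t$ with $t\in M^{0}(a_{1})$ (three-dimensionality of $\langle a_{1},a_{3}\rangle$) and distributing reduces it to $p_{2,0}a_{1}$, handled through the $a_{1}$-fusion, and $p_{2,0}t$, reduced via (1)--(2) to a combination of $p_{3,0},p_{3,\pm1},p_{2,0},p_{1},a_{\pm2},a_{\pm1}$; every contribution proportional to $a_{0}$ (including that carrying the constant $\nu$ of (2)) is absorbed into $\rho a_{0}$.

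The main obstacle is not any single step but the bookkeeping and the interdependence of the products: along the way one meets auxiliary quantities such as $p_{1,0}p_{2,1}$, $p_{1,0}p_{3,0}$ and $p_{1,0}a_{3}$ which must be computed in a suitable order, and one must check that after all cancellations everything collapses onto the span of $a_{0}$, $2p_{1}+\eta(a_{1}+a_{-1})$, $p_{2,0}$, $p_{3,0}$, $p_{3,\pm1}$, $a_{\pm1}$, $a_{\pm2}$ with exactly the displayed coefficients in $\eta,\lambda_{1},\lambda_{2}$. Each collapse is forced by the componentwise idempotency of the axes, the fusion rule, the three-dimensionality of two-generated subalgebras and the invariance principle; keeping the computation organised so that every reduction is transparently one of these — rather than an arithmetic accident — is the delicate part, and is also where $\eta\neq\tfrac{1}{2}$ is essential.
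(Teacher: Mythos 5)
Your list of ingredients is essentially the right one, and for part (1) your reduction can be made to work: with $a_1=\lambda_1a_0+b_1+c_1+d_1$ one has $a_1a_{-1}=\tfrac12(a_1+a_{-1})-2d_1^2$, and the $M^2(a_0)$-component of $a_1^2=a_1$ together with the computable product $b_1c_1\in\mathbb{F}c_1$ pins down $a_0p_{2,1}$ modulo $\mathbb{F}a_0$. This is equivalent to what the paper actually does, namely squaring the element $p_1-(\lambda_1-\eta)a_0+\tfrac{\eta}{2}(a_1+a_{-1})\in M^2(a_0)$ and invoking $M^2(a_0)M^2(a_0)\subset M^0(a_0)\oplus M^1(a_0)$.

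The gap is in (2), and it propagates to (3). Your skeleton for (2) is to expand $a_0p_{3,1}$ by substituting the $a_0$-decompositions of the two axes occurring in $p_{3,1}$. But the cross-products of components of \emph{different} axes are exactly where the unknowns sit, and none of the tools you invoke touches them: componentwise idempotency only controls products of components of the \emph{same} axis, while the Seress relation and the invariance principle apply to elements of $M^0(a_0)\oplus\mathbb{F}a_0$ resp.\ to $G$-invariant elements, and $c_1c_4$, $d_1d_4$, $c_1d_4$ are neither. Worse, $d_1d_4=\tfrac14(a_1-a_{-1})(a_4-a_{-4})$ contains $a_{-1}a_4$ and $a_1a_{-4}$, products of axes at distance $5$ about which nothing is yet known, so the expansion strictly \emph{increases} the width of the unknown products: the procedure is circular. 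The paper never expands $a_0p_{3,\pm1}$ at all; it multiplies the two elements $p_1-(\lambda_1-\eta)a_0+\tfrac{\eta}{2}(a_1+a_{-1})$ and $p_{2,0}-(\lambda_2-\eta)a_0+\tfrac{\eta}{2}(a_2+a_{-2})$, both of which lie in $M^2(a_0)$ and are built only from $a_0,a_{\pm1},a_{\pm2}$. The fusion rule forces this product into $M^0(a_0)\oplus\mathbb{F}a_0$, the distance-$3$ products enter only through the single combination $(a_1+a_{-1})(a_2+a_{-2})$, which determines $a_0(p_{3,1}+p_{3,-1})$, and the antisymmetric part is disposed of separately via $p_{3,1}-p_{3,-1}\in M^3(a_0)$. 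Part (3) is then obtained analogously from the products of the $M^0(a_0)$- and $M^2(a_0)$-components of $p_{2,0}$ with $p_{2,1}$ corrected, using (1), to lie in $M^0(a_0)\oplus\mathbb{F}a_0$. To repair your argument you would need to replace the ``expand both axes'' step by precisely this kind of choice: impose the fusion-rule constraints only on products whose expansions contain the target $p$'s in an already-solvable combination, rather than on a full decomposition that drags in farther-out unknowns.
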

\begin{proof}
\begin{eqnarray*}
M^0(a_0)+\mathbb{F}a_0&\ni&(p_1-(\lambda_1-\eta)a_0+\frac{\eta}{2}(a_1+a_{-1}))^2\\
&&\in\mathbb{F}p_1+\mathbb{F}a_0+\frac{\eta^2}{2}p_{2,1}-\frac{\eta(2\eta-1)(4\eta-3\lambda_1)}{4}(a_1+a_{-1}).
\end{eqnarray*}
Thus (1) holds.

Since
\begin{eqnarray*}
M^0(a_0)+\mathbb{F}a_0\ni&(&p_{2,0}-(\lambda_i-\eta)a_0+\frac{\eta}{2}(a_2+a_{-2}))(p_1-(\lambda_i-\eta)a_0+\frac{\eta}{2}(a_1+a_{-1}))\\
&\in& \mathrm{Span}_{\mathbb{F}}\{a_0,p_1,p_{2,0}\}+\frac{\eta^2}{4}(p_{3,1}+p_{3,-1})\\
&&+\frac{\eta(2\eta-1)(2\lambda_1-\eta)}{4}(a_2+a_{-2})+\frac{\eta(\mu-\eta\lambda_2+2\eta^2)}{2}(a_1+a_{-1}),
\end{eqnarray*}
\begin{eqnarray*}
a_0(p_{3,1}+p_{3,-1})&=&-\frac{(2\eta-1)(2\lambda_1-\eta)}{\eta}(2p_{2,0}+\eta(a_2+a_{-2}))\\
&&-\frac{2(\mu-\eta\lambda_2+2\eta^2)}{\eta}(2p_1+\eta(a_1+a_{-1}))+\nu a_0
\end{eqnarray*}
for some $\nu\in\mathbb{F}$.
Furthermore, $a_0(p_{3,1}-p_{3,-1})=\eta(p_{3,1}-p_{3,-1})$. 
Hence (2) holds.
Set 
$$x=(p_{2,0}-((1-\eta)\lambda_2-\eta)a_0)(p_{2,1}-\frac{(2\eta-1)(4\lambda-3\eta)}{2\eta}(a_1+a_{-1})-(\mu-(2\eta-1)(4\lambda_1-3\eta))a_0)$$
and
$$y=(p_{2,0}-(\lambda_2-\eta)a_0+\frac{\eta}{2}(a_1+a_{-1}))(p_{2,1}-\frac{(2\eta-1)(4\lambda-3\eta)}{2\eta}(a_1+a_{-1})-(\mu-(2\eta-1)(4\lambda_1-3\eta))a_0)$$
Then $y=\frac{1}{\eta}a_0(y-x)$ and hence (3) holds.
\end{proof}

\begin{corollary}\label{corlambda2}\sl
If $\lambda_1=\frac{3\eta}{4}$, then $p_{2,1}=p_{2,0}$ or $\mu=0$.
\end{corollary}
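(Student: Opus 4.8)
The plan is to specialize $\lambda_1=\frac{3\eta}{4}$, so that $4\lambda_1-3\eta=0$, and to exploit that this annihilates almost every term in Proposition \ref{propmulti}(1) and (3); what survives, together with the flip $\theta=f_1\circ\tau_0$, forces the stated dichotomy. Concretely, with $4\lambda_1-3\eta=0$ Proposition \ref{propmulti}(1) reads $a_0p_{2,1}=\mu a_0$, while the identity $a_0p_{i,0}=((1-\eta)\lambda_i-\eta)a_0$ recorded just before Proposition \ref{propmulti} gives $a_0p_{2,0}\in\mathbb{F}a_0$. Since $a_0$ acts on $M^i(a_0)$ by the scalar $\Phi(i)$, with $\Phi(2)=\Phi(3)=\eta\neq0$, any element of $M$ that $a_0$ sends into $\mathbb{F}a_0$ must lie in $M^0(a_0)\oplus\mathbb{F}a_0$. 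Hence I can write $p_{2,0}=\beta a_0+r$ and $p_{2,1}=\mu a_0+q$ with $\beta=(1-\eta)\lambda_2-\eta$ and $r,q\in M^0(a_0)$. Because $a_0^2=a_0$, $a_0M^0(a_0)=0$ and $M^0(a_0)M^0(a_0)\subset M^0(a_0)$, the space $M^0(a_0)\oplus\mathbb{F}a_0$ is a subalgebra which is the direct product of the ideals $\mathbb{F}a_0$ and $M^0(a_0)$, so $p_{2,0}p_{2,1}=\beta\mu a_0+rq$ with $rq\in M^0(a_0)$.

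Next I would substitute $\lambda_1=\frac{3\eta}{4}$ into Proposition \ref{propmulti}(3). Every coefficient appearing there is a multiple of $(2\eta-1)(4\lambda_1-3\eta)=0$ except the coefficient of $p_{2,0}$, which collapses to $\mu$, and the trailing term $\rho a_0$; thus $p_{2,0}p_{2,1}=\mu p_{2,0}+\rho a_0$. Comparing this with the expression $p_{2,0}p_{2,1}=\beta\mu a_0+rq$ in the direct sum $M^0(a_0)\oplus\mathbb{F}a_0$ forces $\rho=0$, so $p_{2,0}p_{2,1}=\mu p_{2,0}$.

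Finally I would invoke symmetry. The automorphism $\theta=f_1\circ\tau_0$ satisfies $\theta(a_i)=a_{1-i}$, and the definition of $p_{i,j}$ then gives $\theta(p_{2,0})=p_{2,-1}$ and $\theta(p_{2,1})=p_{2,-2}$. Since $a_0p_{2,0}\in\mathbb{F}a_0$ holds unconditionally, $p_{2,0}$ lies in $\bigoplus_{i=0}^{2}M^i(a_0)$ and is therefore fixed by $\tau_0$; as $\tau_0(p_{2,0})=p_{2,-2}$, this yields $p_{2,-2}=p_{2,0}$, and applying $f_1$ gives $p_{2,-1}=p_{2,1}$. Hence $\theta$ interchanges $p_{2,0}$ and $p_{2,1}$, and applying $\theta$ to $p_{2,0}p_{2,1}=\mu p_{2,0}$ produces $p_{2,0}p_{2,1}=\mu p_{2,1}$; subtracting gives $\mu(p_{2,0}-p_{2,1})=0$, i.e.\ $\mu=0$ or $p_{2,1}=p_{2,0}$, as required. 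I expect the only points that need care to be the bookkeeping that identifies which coefficient of Proposition \ref{propmulti}(3) survives the specialization, and the unconditional identity $p_{2,0}=p_{2,-2}$ that makes $\theta$ swap the two elements; both are short, and the rest is formal.
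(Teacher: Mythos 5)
Your argument is correct: with $\lambda_1=\frac{3\eta}{4}$ Proposition \ref{propmulti}(1) indeed collapses to $a_0p_{2,1}=\mu a_0$, the eigenvalue argument (valid because $\Phi(2)=\Phi(3)=\eta\neq0$ while $\Phi(0)=0$, $\Phi(1)=1$) correctly forces $p_{2,0},p_{2,1}\in M^0(a_0)\oplus\mathbb{F}a_0$, the specialization of Proposition \ref{propmulti}(3) leaves exactly $\mu p_{2,0}+\rho a_0$, and the product formula on the subalgebra $M^0(a_0)\oplus\mathbb{F}a_0$ kills $\rho$. You arrive at the same pivotal identity $p_{2,0}p_{2,1}=\mu p_{2,0}$ as the paper, but by a different route: the paper notes that $p_{2,1}+p_{2,0}$ is $G$-invariant with $(p_{2,1}+p_{2,0})a_0=(\mu+(1-\eta)\lambda_2-\eta)a_0$, invokes the property recorded just before Proposition \ref{propmulti} (a $G$-invariant element acting as a scalar on $a_0$ acts as the same scalar on every $p_{i,j}$), and subtracts $p_{2,0}p_{2,0}=((1-\eta)\lambda_2-\eta)p_{2,0}$; the closing symmetry step ($\theta$ interchanges $p_{2,0}$ and $p_{2,1}$, whence $\mu(p_{2,1}-p_{2,0})=0$) is the same in both arguments, and your justification that $\theta$ really does swap them --- via $\tau_0(p_{2,0})=p_{2,-2}=p_{2,0}$ --- is exactly the fact the paper needs, silently, for the $G$-invariance of $p_{2,1}+p_{2,0}$. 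What your version buys is independence from the transported-eigenvector property imported from \cite{y20}, replacing it with the explicit expansion in Proposition \ref{propmulti}(3) plus elementary fusion-rule bookkeeping to eliminate $\rho$; the paper's version is shorter but leans on that external lemma and on the idempotent-like relation for $p_{2,0}$. Both are sound, and your extra observation that $p_{2,0}\in M^0(a_0)\oplus\mathbb{F}a_0$ holds unconditionally is worth having in any case.
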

\begin{proof}
If $\lambda_1=\frac{3\eta}{4}$, then $p_{2,1}a_0=\mu a_0$.
Since $p_{2,1}+p_{2,0}$ is $G$-invariant, $(p_{2,1}+p_{2,0})p_{i,j}=(\mu+(1-\eta)\lambda_2-\eta)p_{i,j}$ for all $i,j\in\mathbb{Z}$.
Thus $p_{2,1}p_{2,0}=\mu p_{2,0}$ and then $\mu(p_{2,1}-p_{2,0})=0$.
So the claim holds.
\end{proof}

\section{The list}
\subsection{Specialization of known algebras}
There exist some algebras with $\xi\neq\eta$ which admit the specialization to $\xi=\eta$.
Here is the list of such algebras;
\begin{itemize}
\item primitive axial algebras of Jordan type,
\item $\threealg{\eta}{\eta}{0}$ and $\threealg{\frac{-1}{3}}{\frac{-1}{3}}{0}^{\times}$
\item $\fourone{\frac{1}{4}}{\frac{1}{4}}$,
\item $\fourtwo{2}{2}{\frac{1}{2}}$,
\item $\fourtwo{\xi}{\frac{1-\xi^2}{2}}{\frac{-1}{\xi+1}}$ with $\xi^2+2\xi-1=0$,
\item $\rmfive_1(\frac{-1}{3},\frac{-1}{3})$,
\item $\rmsix_2(\frac{4}{9},\frac{4}{9})$.
\end{itemize}

For the definition of these algebras, see \cite{hrs15} for algebras of Jordan type and \cite{y20} for others.

\subsection{Algebra $\threeeven{\eta}$ and its quotient}
For $\eta\in\mathbb{F}\setminus\{0,1\}$, let $\threeeven{\eta}$ be the $\mathbb{F}$-space $\mathbb{F}\hat{p}_1\oplus\haxdsum{-1}{1}{i}{\mathbb{F}}$ with multiplication given by
\begin{itemize}
\item $x\hat{p}_1=\frac{-\eta(3\eta+1)}{4}x$ for all $x\in\threeeven{\eta}$,
\item $\hat{a}_i\hat{a}_i=\hat{a}_i$ for all $i$,
\item $\hat{a}_i\hat{a}_{i+1}=\hat{p}_1+\eta(\hat{a}_i+\hat{a}_{i+1})$ for $i=-1,0$,
\item $\hat{a}_1\hat{a}_{-1}=(2\eta-1)\hat{p}_1+\eta(\hat{a}_1+\hat{a}_{-1})$.
\end{itemize}
Then this is a dihedral axial decomposition algebra of Majorana type $(\eta,\eta)$.
When $\eta=\frac{-1}{3}$,  the algebra admits a quotient $\threeeven{\frac{-1}{3}}^{\times}=\threeeven{\frac{-1}{3}}/\mathbb{F}\hat{p}_1$.

\subsection{Algebra $\foureven{\eta}$ and its quotient}

For $\eta\in\mathbb{F}\setminus\{0,1,-1\}$, let $\foureven{\eta}$ be the $\mathbb{F}$-space $\mathbb{F}\hat{p}_1\oplus\haxdsum{-1}{2}{i}{\mathbb{F}}$ with multiplication given by
\begin{itemize} 
\item $x\hat{p}_1=\frac{-\eta(3\eta+1)}{4}x$ for all $x\in\foureven{\eta}$
\item $\hat{a}_i\hat{a}_i=\hat{a}_i$ for all $i$,
\item $\hat{a_i}\hat{a}_i=\hat{p}_1+\eta(\hat{a}_o+\hat{a}_i)$ For $i=-1,0,1$
\item $\hat{a}_i\hat{a}_{i+2}=\frac{2\eta}{\eta+1}\eta\hat{p}_1+\eta(\hat{a}_i+\hat{a}_{i+2})$ for $i=-1,0$
\item $\hat{a}_{-1}\hat{a}_2=\frac{5\eta^2-1}{(\eta+1)^2}\hat{p}_1+\eta(\hat{a}_{-1}+\hat{a}_2)$
\end{itemize}
Then this is a dihedral axial decomposition algebra of Majorana type $(\eta,\eta)$.
When $\eta=\frac{-1}{3}$, the algebra admits a quotient $\foureven{\frac{-1}{3}}^{\times}=\foureven{\frac{-1}{3}}/\mathbb{F}\hat{p}_1$.

\subsection{Algebra $\barfourtwo$}

Let $\barfourtwo$ be the $\mathbb{F}$-space $\mathbb{F}\hat{p}_1\oplus\mathbb{F}\hat{p}_{2,0}\oplus\mathbb{F}\hat{p}_{2,1}\oplus\axdsum{-1}{2}{i}{\mathbb{F}}$ with multiplication given by
\begin{itemize} 
\item $x\hat{p}_1=-3x$ for all $x\in M$,
\item $\hat{a}_{i}\hat{p}_{2,j}=-3a_{i}$ if $i-j\in2\mathbb{Z}$,
\item $\hat{a}_{i}\hat{p}_{2,j}=-3p_1-3(\hat{a}_{i-1}+\hat{a}_{i+1})-6a_{i}$ if $i-j\notin2\mathbb{Z}$,
\item $\hat{p}_{2,0}\hat{p}_{2,1}=9(\hat{p}_1+\hat{a}_0+\hat{a}_1+\hat{a}_{-1}+\hat{a}_2)$,
\item $\hat{a}_i\hat{a}_i=\hat{a}_i$ for all $i$,
\item $\hat{a}_i\hat{a}_j=\hat{p}_1+\eta(\hat{a}_i+\hat{a}_j$ if $|i-j|=1$ or 3,
\item $\hat{a}_i\hat{a}_{i+2}=\hat{p}_{2,|i|}+\eta(\hat{a}_i+\hat{a}_{i+2})$ for $i=0,-1$.
\end{itemize}
Then this is a dihedral axial decomposition algebra of Majorana type $(2,2)$ and a quotient
\begin{eqnarray*}
\barfourtwo/(\mathbb{F}&&(p_{2,0}+p_1+2(a_2+a_0)+a_1+a_{-1})\\
&&\oplus\mathbb{F}(p_{2,1}+p_1+a_2+a_0+2(a_1+a_{-1}))
\end{eqnarray*}
is isomorphic to $\fourtwo{2}{2}{\frac{1}{2}}$.

\subsection{Algebra $\fivethree{\eta}$}

Let $\fivethree{\eta}$ be the $\mathbb{F}$-space $M=\haxdsum{-2}{2}{i}{\mathbb{F}}$ with multiplication given by
\begin{itemize}
\item $\hat{a}_i\hat{a}_i=\hat{a}_i$ for all $i$
\item $\hat{a}_i\hat{a}_j=\frac{-\eta}{4}(a_0+\hat{a}_1+\hat{a}_{-1}+\hat{a}_{2}+\hat{a}_{-2})+\eta(\hat{a}_i+\hat{a}_j)$ if $i\neq j$.
\end{itemize}
Then this is a dihedral axial decomposition algebra of Majorana type $(\eta,\eta)$.
When $\eta=\frac{-1}{3}$, the algebra admits a quotient $\fivethree{\frac{-1}{3}}/\mathbb{F}(\hat{a}_2+\hat{a}_{-2}+\hat{a}_1+\hat{a}_{-1}+\hat{a}_0)$ and this quotient is isomorphic to $\foureven{\frac{-1}{3}}^{\times}$.

\subsection{Algebra $\sixthree{\eta}$}

For $\eta\in\mathbb{F}\setminus\{0,1\}$, let $\sixthree{\eta}$ be the $\mathbb{F}$-space $\haxdsum{-2}{3}{i}{\mathbb{F}}\oplus\mathbb{F}p_1$ with multiplication given by
\begin{itemize}
\item $\hat{a}_i\hat{a}_i=\hat{a}_i$,
\item $\hat{a}_i\hat{a}_{i+1}=\hat{p}_1+\eta(a_{i}+a_{i+1})$, 
\item $\hat{a}_i\hat{a}_{i+3}=0$,
\item $\hat{a}_i\hat{a}_{i+2}=\frac{\eta}{2}(\hat{a}_i+\hat{a}_{i+2}-\hat{a}_{i-2})$ where $\hat{a}_i=\hat{a}_{i+6}$,
\item $x\hat{p}_1=\frac{-\eta^2}{2}x$ for all $x\in M$
\end{itemize}
Then $\sixthree{\eta}$ is a dihedral axial decomposition algebra of Majorana type $(\eta,\eta)$ if $\eta=-1\pm\sqrt{2}$.

\subsection{Algebra $\sevenalg$}

Let $\sevenalg$ be the $\mathbb{F}$-space $\mathbb{F}p_1\oplus\axdsum{-3}{3}{i}{\mathbb{F}}$ with multiplication given by
\begin{itemize}
\item $x\hat{p}_1=\frac{-5}{3}x$ for all $x\in\sevenalg$,
\item $\hat{a}_i\hat{a}_i=\hat{a}_i$,
\item $\hat{a}_i\hat{a}_{j}=\hat{p}_1+\frac{4}{3}(\hat{a}_i+\hat{a}_{j})$ if $|i-j|=1$, 2 or 4
\item $\hat{a}_i\hat{a}_{i+3}=\frac{4}{3}(\hat{a}_i+\hat{a}_{i+3})+\hat{p}_{3,i}$ where
$$\hat{p}_{3,0}=\frac{-2}{3}(\hat{a}_3-\hat{a}_{-3})+\frac{1}{3}(\hat{a}_2+\hat{a}_{-2}-\hat{a}_1-\hat{a}_{-1}-\hat{a}_0),$$
$\hat{p}_{3,1}=\frac{5}{3}(\hat{a}_3-\hat{a}_{-2})+\hat{p}_{3,0}$, $\hat{p}_{3,-1}=\frac{5}{3}(\hat{a}_{-3}-\hat{a}_{2})+\hat{p}_{3,0}$ and $\hat{p}_{3,i+3}=\hat{p}_{3,i}$.
\item $\hat{a}_3\hat{a}_{-2}=\hat{p}_1-\hat{p}_{3,1}+\frac{4}{3}\hat{a}_3-\frac{1}{3}\hat{a}_{-2}$ and $\hat{a}_2\hat{a}_{-3}=\hat{p}_1-\hat{p}_{3,-1}+\frac{4}{3}\hat{a}_{-3}-\frac{1}{3}\hat{a}_2.$
\item $\hat{a}_3\hat{a}_{-3}=\hat{p}_{3,0}+\frac{4}{3}(\hat{a}_3+\hat{a}_{-3})$.
\end{itemize}
Then $\sevenalg$ is a dihedral axial decomposition algebra of Majorana type $(\frac{4}{3},\frac{4}{3})$.
The algebra admits a quotient $\sevenalg^{\times}=\sevenalg/\mathbb{F}\hat{p}_1$ if $\chf{\mathbb{F}}=5$.

\section{Main  result}
Our main result is the following.

\begin{theorem}\sl
A dihedral axial decomposition algebra of Majorana type $(\eta,\eta)$ with $\eta\neq0,1,\frac{1}{2}$ is isomorphic to one of the algebras listed in Section 3.
\end{theorem}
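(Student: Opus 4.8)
The plan is to classify $M$ by its axial dimension $\adim$, which we will show is at most $7$. The first step is to fix a manageable spanning set. Writing $M=\mathbb{F}a_0\oplus M^0(a_0)\oplus M^2(a_0)\oplus M^3(a_0)$ and using the two identities stated just before Proposition \ref{propmulti}, one checks that $M$ is spanned by the axes $a_i$ together with the elements $p_{i,0}$ (which, up to the correction $-(\lambda_i-\eta)a_0+\frac{\eta}{2}(a_i+a_{-i})$, lie in $M^2(a_0)$) and the differences $p_{i,1}-p_{i,-1}$ (which lie in $M^3(a_0)$). Applying Proposition \ref{propmulti} repeatedly, together with the shift $f_1$, the reflection $\tau_0$, and the fact that a $G$-invariant element acting on $a_0$ as $\pi a_0$ acts on every $p_{i,j}$ as $\pi p_{i,j}$, one then shows that every product of these spanning elements reduces to an expression in $\eta$, $\lambda_1$, $\lambda_2$ and the scalars $\mu,\nu,\rho$ of Proposition \ref{propmulti}. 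Thus, once a single nontrivial linear relation among the axes is known, only finitely many parameters remain to be pinned down.

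First I would carry out the case division governed by $\lambda_1$. If $\lambda_1\neq\frac{3\eta}{4}$ then, since $\eta\neq\frac{1}{2}$, the coefficient $\frac{(2\eta-1)(4\lambda_1-3\eta)}{2\eta}$ of Proposition \ref{propmulti}(1) is nonzero, so $p_{2,1}$ genuinely involves $p_1$ and, via Proposition \ref{propmulti}(2),(3), the chain $p_{2,0},p_{2,1},p_{3,\pm1},\dots$ does not collapse. If $\lambda_1=\frac{3\eta}{4}$ then Corollary \ref{corlambda2} splits the problem into the subcase $p_{2,1}=p_{2,0}$, in which $M^2(a_0)$ is essentially one-dimensional and one is quickly forced to $\adim\leq3$, and the subcase $\mu=0$, which is handled similarly. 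In each branch, feeding a minimal linear relation of the form $\sum_i\alpha_i(a_{i+1}\pm a_{-i})=0$, or its symmetric counterpart, into Lemmas \ref{leminv1} and \ref{leminv} produces strictly shorter relations and hence a ceiling on $\adim$; pursuing this descent through all branches gives $\adim\leq7$. This is where the assumption $\eta\neq\frac{1}{2}$ is essential: $\eta=\frac{1}{2}$ is exactly the value at which the bound fails and unbounded examples occur.

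It then remains to identify $M$ in each axial dimension $\adim\leq7$. The algebras of Jordan type and the specializations of Section 3.1 account for the cases in which the relevant $p$-elements vanish or coincide; otherwise, solving the finitely many surviving structure equations forces $M$, up to the shift automorphism, to be $\threeeven{\eta}$ when $\adim=3$; $\foureven{\eta}$, or $\barfourtwo$ when $\eta=2$ (a quotient of which is the listed algebra $\fourtwo{2}{2}{\frac{1}{2}}$), when $\adim=4$; $\fivethree{\eta}$ when $\adim=5$; $\sixthree{\eta}$ when $\adim=6$, which has a solution only for $\eta=-1\pm\sqrt{2}$; and $\sevenalg$ when $\adim=7$, which forces $\eta=\frac{4}{3}$. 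The degeneration at $3\eta+1=0$, i.e.\ $\eta=\frac{-1}{3}$ (which for $\eta=\frac{4}{3}$ means $\chf{\mathbb{F}}=5$), accounts for the quotients $\threeeven{\frac{-1}{3}}^{\times}$, $\foureven{\frac{-1}{3}}^{\times}$ and $\sevenalg^{\times}$ in the list. Finally $\adim\geq8$ is impossible, and since Section 3 already shows that each listed algebra is a dihedral axial decomposition algebra of Majorana type $(\eta,\eta)$, the classification is complete.

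The hardest part will be the bookkeeping in the middle step: keeping track of the polynomial conditions in $(\eta,\lambda_1,\lambda_2,\mu,\nu,\rho)$ produced by each application of Lemmas \ref{leminv1} and \ref{leminv}, solving the resulting systems separately in each axial dimension, and in particular ruling out $\adim\geq8$ and showing that the sporadic algebras $\sixthree{\eta}$ and $\sevenalg$ force their special values of $\eta$. Divisions by $2\eta-1$, $3\eta+1$ and $\eta+1$ must be handled with care, since these vanish for $\eta=\frac{1}{2},\frac{-1}{3},-1$ respectively; the degeneration at $3\eta+1=0$ is precisely what produces the quotient algebras listed above.
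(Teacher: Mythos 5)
Your overall strategy --- bound the axial dimension via the invariance lemmas, then classify case by case in each dimension by solving for the remaining structure constants --- is the same as the paper's. However, the way you organize the case analysis contains a concrete error that would break the classification. You claim that when $\lambda_1=\frac{3\eta}{4}$, Corollary \ref{corlambda2} leads to a subcase $p_{2,1}=p_{2,0}$ in which ``one is quickly forced to $\adim\leq3$''. This is false, and it contradicts the paper's own list: in $\fivethree{\eta}$ all the $p_{i,j}$ coincide, $\lambda_1=\lambda_2=\frac{3\eta}{4}$ and $\adim=5$; in $\sevenalg$ one has $p_1=p_{2,0}=p_{2,1}$, $\lambda_1=\lambda_2=\frac{3\eta}{4}$ and $\adim=7$; and $\foureven{\eta}$ also lives in the branch $\lambda_1=\frac{3\eta}{4}$. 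The condition $p_{2,1}=p_{2,0}$ only makes $p_{2,0}$ a $G$-invariant element; the hypothesis that actually collapses the dimension (Step \ref{stadimleq}) is $p_{3,1}=p_{3,0}$, and even that only yields $\adim\leq6$, not $\adim\leq3$. As written, your branch structure would wrongly discard several of the algebras the theorem is supposed to produce, so the descent has to be reorganized along the lines of the paper: first bound $\adim$ unconditionally, then within each fixed $\adim$ use the even/odd relation together with $\theta$- and $\tau_0$-invariance of products such as $a_2a_{-2}=f_1(a_1a_{-3})$ to pin down the relation coefficients, and only then solve for $\lambda_1,\lambda_2,\mu,\nu,\rho$.

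A secondary gap concerns the bound itself. What Lemmas \ref{leminv1} and \ref{leminv} actually give, applied to $(p_1-(\lambda_1-\eta)a_0+\frac{\eta}{2}(a_1+a_{-1}))(p_{3,0}-((1-\eta)\lambda_3-\eta)a_0)\in M^2(a_0)$ (or to $p_{2,0}p_{2,1}$ when $\lambda_1=\frac{\eta}{2}$), is $\adim\leq8$ with an odd relation in the boundary case; excluding $\adim=8$ is then a separate and fairly long computation, reducing the relation $a_4-a_{-4}+\alpha(a_3-a_{-3})+\beta(a_2-a_{-2})+\gamma(a_1-a_{-1})=0$ to $(\alpha,\beta,\gamma)\in\{(\alpha,0,0),(1,1,1),(2,2,1)\}$ and eliminating each possibility. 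Your outline asserts $\adim\leq7$ from a generic ``strictly shorter relations'' descent and separately notes that $\adim\geq8$ is impossible, but supplies no mechanism for either; this step cannot be waved through, since it is where the sporadic constraints on $\eta$ (e.g.\ $\eta=-1\pm\sqrt{2}$ for $\sixthree{\eta}$ and $\eta=\frac{4}{3}$ for $\sevenalg$) and the exclusion of dimension $8$ actually come from.
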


Let us now show the outline of the proof.
The details will be given in the next section.

Assume that $\eta\in\mathbb{F}\setminus\{0,1,\frac{1}{2}\}$ and $M$ is a dihedral axial decomposition algebras of Majorana type $(\eta,\eta)$ generated by $\{a_i\}_{i\in\mathbb{Z}}$ not of Jordan type.
We will prove the theorem in the following steps.

\begin{step}\label{stadimleq}
$\adim\leq8$ and in the case with $\adim=8$, $M$ satisfies an odd relation.
Moreover, if $p_{3,1}=p_{3,0}$, $\adim\leq6$ and in the case with $\adim=6$, $M$ satisfies an odd relation.
\end{step}

\begin{step}\label{stadim3}
If $\adim\leq3$, then one of the following holds:
\begin{itemize}
\item[(1)] $M$ is isomorphic to $\threealg{\eta}{\eta}{0}$ or $\threealg{\frac{-1}{3}}{\frac{-1}{3}}{0}^{\times}$.
\item[(2)] $M$ is isomorphic to $\threeeven{\eta}$ or $\threeeven{\frac{-1}{3}}^{\times}$.
\end{itemize}
\end{step}

\begin{step}\label{stadim4}
If $\adim=4$, then one of the following holds:
\begin{itemize}
\item[(1)] $M$ is isomorphic to $\foureven{\eta}$ or $\foureven{\frac{-1}{3}}^{\times}$.
\item[(2)] $M$ is isomorphic to $\fourone{\frac{1}{4}}{\frac{1}{4}}$.
\item[(3)] $M$ is isomorphic to $\barfourtwo$ or $\fourtwo{2}{2}{\frac{1}{2}}$.
\item[(4)] $M$ is isomorphic to $\fourtwo{\xi}{\frac{1-\xi^2}{2}}{\frac{-1}{\xi+1}}$ with $\xi^2+2\xi-1=0$.
\end{itemize}
\end{step}

\begin{step}\label{stadim5}
If $\adim=5$, then one of the following holds:
\begin{itemize}
\item[(1)] $M$ is isomorphic to $\rmfive_1(\frac{-1}{3},\frac{-1}{3})$
\item[(2)] $M$ is isomorphic to $\fivethree{\eta}$
\end{itemize}
\end{step}

\begin{step}\sl\label{stadim6}
If $\adim=6$, then one of the following holds:
\begin{itemize}
\item[(1)] $M$ is isomorphic to $\sixthree{\eta}$ with $\eta^2+2\eta-1=0$.
\item[(2)] $M$ is isomorphic to $\rmsix_2(\frac{4}{9},\frac{4}{9})$.
\end{itemize}
\end{step}

\begin{step}\label{stadim7}
If $\adim=7$, then $M$ is isomorphic to $\sevenalg$.
\end{step}

\begin{step}\label{stadim8}
$\adim\neq8$.
\end{step}

\section{Proof of the main theorem}

\subsection{Proof of Step \ref{stadimleq}}

Since $(p_1-(\lambda_1-\eta)a_0+\frac{\eta}{2}(a_1+a_{-1}))(p_{3,0}-((1-\eta)\lambda_3-\eta))\in M^2(a_0)$,
\begin{eqnarray*}
0&=&\alpha_1p_1+\alpha_2(p_{3,0}+p_{3,1}+p_{3,-1})+\alpha_3(p_{2,0}+p_{2,1})+\alpha_4p_{3,0}+\alpha_5p_{2,0}\\
&&+\frac{\eta^2(2\eta-1)(2\lambda_1-\eta)}{4}(a_2+a_{-2})+\beta_1(a_1+a_{-1})+\beta_0a_0
\end{eqnarray*}
for some $\alpha_i,\beta_i\in\mathbb{F}$.
If $\lambda_1\neq\frac{\eta}{2}$, then by Lemma \ref{leminv} (1), $\adim\leq8$ and odd relation holds when $\adim=8$.
In the case when $p_{3,0}=p_{3,1}$, $g(p_{3,0})=p_{3,0}$ for all $g\in G$.
Thus by Lemma \ref{leminv} (2), the claim holds in this case.

Assume $\lambda_1=\frac{\eta}{2}$, then
\begin{eqnarray*}
0&=&-p_{2,0}p_{2,1}+\frac{(2\eta-1)(4\lambda_1-3\eta)}{4}(p_{3,0}+p_{3,1}+p_{3,-1})\\
&&+\frac{(2\eta-1)(4\lambda_1-3\eta)(4(2\eta-1)\lambda_1-\eta\lambda_2-5\eta^2+3\eta)}{\eta^2}p_1\\
&&+\frac{(2\eta-1)(4\lambda_1-3\eta)}{4}p_{3,0}\\
&&+(\mu+\frac{(2\eta-1)(4\lambda_1-3\eta)(2(2\eta-1)\lambda_1-4\eta^2+\eta)}{2\eta^2})p_{2,0}\\
&&+\frac{(2\eta-1)^2(4\lambda_1-3\eta)(3\lambda_1-2\eta)}{2\eta}(a_2+a_{-2})\\
&&+\frac{(2\eta-1)(4\lambda_1-3\eta)(2\mu-\eta\lambda_2+2\eta^2)}{2\eta}(a_1+a_{-1})+\rho a_0.
\end{eqnarray*}
Thus By lemma \ref{leminv}, the claim holds.

\subsection{Proof of Step \ref{stadim3}}
Assume $\adim\leq3$.
By the argument as the case with $\xi\neq\eta$, $\adim=3$. 

Assume that $M$ satisfies an even relation.
Then there exists $\alpha\in\mathbb{F}$ such that
$$a_2+a_{-1}+\alpha(a_1+a_0)=0.$$
Thus
\begin{eqnarray*}
0&=&a_0(a_2+a_{-1}+\alpha(a_1+a_0))\\
&=&p_{2,0}+(\alpha+1)p_1+(2\eta+\alpha)a_0.
\end{eqnarray*}
By $\tau_1$-invariance, $(2\eta+\alpha)(a_2-a_0)=0$.
Since $\adim=3$, $\alpha=-2\eta$ and thus $p_{2,0}=p_{2,1}=-(\alpha+1)p_1$.

Then
\begin{eqnarray*}
0&=&a_0(p_{2,1}-p_{2,0})\\
&=&\frac{(2\eta-1)(4\lambda-3\eta)}{2\eta}(2p_1+\eta(a_1+a_{-1}))+(\mu-(1-\eta)\lambda_2+\eta)a_0
\end{eqnarray*}
Thus $\lambda_1=\frac{3\eta}{4}$ or $p_1=\frac{-\eta}{2}(a_1+a_{-1})+\beta a_0$ for some $\beta\in\mathbb{F}$.
By Lemma \ref{leminv} (3), odd relation holds if $\lambda\neq\frac{3\eta}{4}$.
So $\lambda_1=\frac{3\eta}{4}$ and then all multiplications are determined.
The quotient which did not appear yet is only $\threeeven{\frac{-1}{3}}^{\times}$.

Assume that $M$ satisfies an odd relation.
Then there exists $\alpha\in\mathbb{F}$ such that
$$a_2-a_{-1}+\alpha(a_1-a_0)=0.$$
Thus
\begin{eqnarray*}
0&=&a_0(a_2-a_{-1}+\alpha(a_1-a_0))\\
&=&p_{2,0}+(\alpha-1)p_1+\alpha(2\eta-1)a_0.
\end{eqnarray*}
Then by a similar argument above, $\alpha=0$ and then $\lambda_1=\frac{3\eta}{4}$ or $p_1=-\frac{\eta}{2}(a_1+a_{-1}+a_0)$.
Then $M$ is isomorphic to $\threealg{\eta}{\eta}{0}$ or of Jordan type $\eta$.
Both cases are known in the case when $\xi\neq\eta$.

\subsection{Proof of Step \ref{stadim4}}
Assume that $\adim=4$.

Assume that $M$ satisfies an even relation.
Then there exists $\alpha,\beta\in\mathbb{F}$ such that
$$a_2+a_{-2}+\alpha(a_1+a_{-1})+\beta a_0=0.$$
Thus
\begin{eqnarray*}
0&=&a_0(a_2+a_{-2}+\alpha(a_1+a_{-1})+\beta a_0)\\
&=&2p_{2,0}+2\alpha p_1+((2\alpha-\beta+2)\eta+\beta)a_0.
\end{eqnarray*}
By the $\tau_1$-invariance, $(2\alpha-\beta+2)\eta+\beta=0$ and thus $p_{2,0}=-2\alpha p_1$.
Hence by a similar argument as the case when $\adim=3$, $\lambda_1=\frac{3\eta}{4}$.

Then
\begin{eqnarray*}
a_2a_{-1}&=&f_1(a_1a_{-2})\\
&=&f_1(a_1(-a_2-\alpha(a_1+a_{-1})-\beta a_0))\\
&=&-\alpha p_{2,0}-(\beta+1)r_1+\eta(a_2+a_{-1})-((\beta+2)\eta+\alpha)a_2
\end{eqnarray*}
By the $\theta$-invariance, $(\beta+2)\eta+\alpha=0$.
Hence $\alpha=\beta=\frac{-2\eta}{\eta+1}$ and then all multiplications are determined.
Then the quotient which does not change the axial dimension is only $\foureven{\frac{-1}{3}}^{\times}$.

Assume that $M$ satisfies an odd relation.
Then there exists $\alpha\in\mathbb{F}$ such that 
$$a_2-a_{-2}+\alpha(a_1+a_{-1})=0.$$
Then
\begin{eqnarray*}
a_2a_{-1}&=&f_1(a_1a_{-2})\\
&=&f_1(a_1(a_2+\alpha(a_1-a_{-1}))\\
&=&p_1-\alpha p_{2,0}+\eta(a_2+a_{-1})-(2\eta-1)a_2.
\end{eqnarray*}
By the $\theta$-invariance, 
$$\alpha(p_{2,1}-p_{2,0})-\alpha(2\eta-1)(a_2-a_{-1})=0.$$
Thus by Lemma \ref{leminv1}, $\alpha=0$ or 1.
In the case when $\alpha=1$, the argument in \cite{y20} works and then $\eta=2=\frac{1}{2}$.
So this case is not appropriate.

So we may assume that $\alpha=0$.
In this case, $\langle a_0,a_2\rangle_{alg}$ is of Jordan type $\eta$.

Assume that $a_0a_2=0$.
Since
\begin{eqnarray*}
M^2(a_0)\ni&& a_2(p_1-(\lambda_1-\eta)a_0+\frac{\eta}{2}(a_1+a_{-1}))\\
&&\in M^2(a_0)+\mathbb{F}a_0+(1-\eta)(\lambda_1-\eta)a_2,
\end{eqnarray*}
$\lambda_1=\eta$.
Thus since $p_{2,1}=-\eta(a_1+a_{-1})$, $\eta=\frac{1}{4}$. Then $M$ is isomorphic to $\fourone{\frac{1}{4}}{\frac{1}{4}}$.

Assume that $a_0a_2\neq0$.
Then $\lambda_2=\frac{\eta}{2}$.
Since $p_{3,1}=p_1$,
$(\lambda_1,\mu)=(\frac{\eta}{2},\frac{-3\eta^2}{2})$ or $p_{2,0}=\delta(p_1+\frac{\eta}{2}(a_1+a_{-1})-\eta(a_0+a_2)$ for some $\delta\in\mathbb{F}$.

Assume that $(\lambda_1,\mu)=(\frac{\eta}{2},\frac{-3\eta^2}{2})$.
Then by the $\theta$-invariance of $p_{2,0}p_{2,1}$, $\eta=2$ or $p_{2,1}=p_{2,0}+\frac{2\eta-1}{2}(a_2-a_{-1}-a_1+a_0)$.
If $\eta=2$, all multiplication is determined and the fusion rule holds.
If $\eta\neq2$, by considering $a_0p_{2,1}$, $p_{2,0}=-\frac{\eta}{2}a_2-a_0$ and thus $\eta=2$.
Then the quotient with $\adim=4$ is only $\fourtwo{2}{2}{\frac{1}{2}}$.

Assume that $p_{2,0}=\delta(p_1+\frac{\eta}{2}(a_1+a_{-1})-\eta(a_0+a_2)$ for some $\delta\in\mathbb{F}$.
Then $a_2+\delta\eta a_0-\frac{\delta}{2}(a_1+a_{-1})=0$.
Then the same argument as the case with $\xi\neq\eta$ works and $M$ is isomorphic to $\fourtwo{\xi}{\eta}{\mu}$ for some $\xi,\eta,\mu\in\mathbb{F}$.
So $(\xi,\eta,\mu)=(2,2,\frac{1}{2})$$\eta=\frac{1-\xi^2}{2}$, $\mu=\frac{-1}{\xi+1}$ and $\xi^2+2\xi-1=0$.

\subsection{Proof of Step \ref{stadim5}}

Assume that $\adim=5$.

Assume that $M$ satisfies an even relation.
Then there exist $\alpha,\beta\in\mathbb{F}$ such that
$$a_3+a_{-2}+\alpha(a_2+a_{-1})+\beta(a_1+a_0)=0.$$

Then 
\begin{eqnarray*}
a_{-1}a_2&=&f_{-1}(a_0a_3)\\
&=&-(\alpha+1)p_{2,1}-(\alpha+\beta)p_1+\eta(a_{-1}+a_2)-(2(\alpha+1)\eta+\beta)a_{-1}.
\end{eqnarray*}
By the $\theta$-invariance, 
$$(\alpha+1)(p_{2,1}-p_{2,0})=(2(\alpha+1)\eta+\beta)(a_2-a_{-1}).$$
So by Lemma \ref{leminv1} (2), $2(\alpha+1)\eta+\beta=0$ and thus $(\alpha+1)(p_{2,1}-p_{2,0})=0$.
Moreover, $p_{3,0}=-(\alpha+1)p_{2,0}-(\alpha+\beta)p_1=p_{3,1}=p_{3,-1}$.

Since
\begin{eqnarray*}
a_{-2}a_2&=&f_1(a_{-1}a_3)\\
&=&-\alpha p_{3,0}-\beta p_{2,0}-(\beta+1)p_1+\eta(p_{-2}+p_2)-(2(\beta+1)\eta+\alpha)a_{-2},
\end{eqnarray*}
by Lemma \ref{leminv} (3), $2(\beta+1)\eta+\alpha=0$.
Thus $\alpha=\beta=\frac{-2\eta}{2\eta+1}\neq0$ and $p_{2,1}=p_{2,0}$

Since $a_0p_{2,1}=a_0p_{2,0}$, $\lambda_1=\frac{3\eta}{4}$
Since
\begin{eqnarray*}
M^0(a_0)\ni&&(p_{2,0}-(\lambda_2-\eta)a_0+\frac{\eta}{2}(a_2+a_{-2}))^2\\
&&\in M^0(a_0)-\frac{\eta(2\eta-1)(4\lambda_2-3\eta)}{4}(a_2+a_{-2}),
\end{eqnarray*}
$(4\lambda_2-3)(2p_{2,0}+\eta(a_2+a_{-2}))\in\mathbb{F}a_0$.
By Lemma \ref{leminv} (3), $\lambda_2=\frac{3\eta}{4}$.

So
\begin{eqnarray*}
M^0(a_0)+\mathbb{F}a_0\ni&&(p_1+\frac{\eta}{4}a_0+\frac{\eta}{2}(a_1+a_{-1}))(p_{2,0}+\frac{\eta}{4}a_0+\frac{\eta}{2}(a_2+a_{-2}))\\
&&\in M^0(a_0)+\mathbb{F}a_0+\frac{\eta^2(2\eta-1)}{8}(a_2+a_{-2}+a_1+a_{-1}).
\end{eqnarray*}
Then $2p_{2,0}+2p_1+\eta(a_2+a_{-2}+a_1+a_{-1})\in\mathbb{F}a_0$.
By Lemma \ref{leminv} (3), $M$ satisfies an odd relation in this case and thus this case is not appropriate.

Assume that $M$ satisfies an odd relation.
Then there exist $\alpha, \beta\in\mathbb{F}$ such that
$$a_3-a_{-2}+\alpha(a_2-a_{-1})+\beta(a_1-a_0)=0.$$

Since
\begin{eqnarray*}
a_{-1}a_2&=&f_1(a_0a_3)\\
&=&(1-\alpha)p_{2,1}+(\alpha-\beta)p_1+\eta(a_{-1}+a_2)-(2\eta-1)\beta a_{-1},
\end{eqnarray*}
$(\alpha-1)(p_{2,1}-p_{2,0})=(2\eta-1)\beta(a_2-a_{-1})$ by the $\theta$-invariance.
By the $\tau_0$-invariance, $\beta=0$ and thus $(\alpha-1)(p_{2,1}-p_{2,0})=0$ and $p_{3,0}=(1-\alpha)p_{2,0}+(\alpha-\beta)p_1=p_{3,1}=p_{3,-1}$.

Since
\begin{eqnarray*}
a_{-2}a_2&=&f_1(a_{-1}a_3)\\
&=&-\alpha p_{3,0}+p_1+\eta(a_{-2}+a_2)-(2\eta-1)\alpha a_{-2},
\end{eqnarray*}
$\alpha=0$ by the $\tau_0$-invariance.
Hence $p_{2,1}=p_{2,0}$ and thus $\lambda_1=\frac{3\eta}{4}$.

Then by the same calculation above, $\lambda_2=\lambda_1$ and $p_{2,0}=-p_1-\frac{\eta}{2}(a_2+a_{-2}+a_1+a_{-1}+a_0)$.
Since $\frac{-\eta(3\eta+1)}{4}p_1=p_1p_{2,0}=\frac{-\eta(3\eta+1)}{4}p_{2,0}$, $\eta=\frac{-1}{3}$ or $p_{2,0}=p_1$.

If $\eta=\frac{-1}{3}$, then $M$ is isomorphic to $\rmfive_1(\frac{-1}{3},\frac{-1}{3})$.

If $p_{2,0}=p_1$, then $a_ia_j=-\frac{\eta}{4}(a_2+a_{-2}+a_1+a_{-1}+a_0)+\eta(a_i+a_j)$ for all $i\neq j$ and $M=\bigoplus_{i=-2}^2\mathbb{F}a_i$.
Then the fusion rule holds.

\subsection{Proof of Step \ref{stadim6}}

Assume that $\adim=6$

Assume that $M$ satisfies an even relation.
Then there exists $\alpha,\beta,\gamma\in\mathbb{F}$ such that
$$a_3+a_{-3}+\alpha(a_2+a_{-2})+\beta(a_1+a_{-1})+\gamma a_0=0.$$

Since
\begin{eqnarray*}
a_2a_{-2}&=&f_1(a_1a_{-3})\\
&=&-\alpha p_{3,-1}-(\beta+1)p_{2,0}-(\alpha+\gamma)p_1+\eta(a_2+a_{-2})-((2\alpha+\gamma+2)\eta+\beta)a_2,
\end{eqnarray*}
$\alpha(p_{3,1}-p_{3,-1})=((2\alpha+\gamma+2)\eta+\beta)(a_2-a_{-2})$ by the $\tau_0$-invariance.
Then $((2\alpha+\gamma+2)\eta+\beta)(a_3-a_{-3}+a_2-a_{-2})=0$.
So $(2\alpha+\gamma+2)\eta+\beta=0$ and $\alpha(p_{3,1}-p_{3,-1})=0$.
Since $M$ satisfies an odd relation if $p_{3,1}=p_{3,0}$ and $\adim=6$, $\alpha=0$.

Since
\begin{eqnarray*}
a_3a_{-2}&=&f_1(a_2a_{-3})\\
&=&-\beta p_{3,0}-\gamma p_{2,1}-(\beta+1)p_1+\eta(a_3+a_{-2})-(2\beta+\gamma+2)\eta a_3,
\end{eqnarray*}
$\beta(p_{3,1}-p_{3,0})-\gamma(p_{2,1}-p_{2,0})-(2\beta+\gamma+2)\eta(a_3+a_{-2})=0$ by the $\theta$-invariance.
Then, by Lemma \ref{leminv} (1), 
\begin{eqnarray*}
0&=&(2\beta+\gamma+2)\eta(a_4-a_{-4}+2(a_3-a_{-3})+2(a_2-a_{-2})+a_1-a_{-1})\\
&=&(2\beta+\gamma+2)\eta(2(a_3-a_{-3}+(3-\beta)(a_2-a_{-2})+(1-\gamma)(a_1-a_{-1})).
\end{eqnarray*}
Hence $2\beta+\gamma+2=0$.
Then $(\beta,\gamma)=(0,-2)$ since $p_{3,1}\neq p_{3,0}$.

Then $a_3a_{-3}=2p_{3,0}+\eta(a_3+a_{-3})+(2\eta-1)a_3$.
But this can not be $\tau_0$-invariant and thus this is not appropriate.

Assume that $M$ satisfies an odd relation.
Then there exist $\alpha,\beta\in\mathbb{F}$ such that
$$a_3-a_{-3}+\alpha(a_2-a_{-2})+\beta(a_1-a_{-1})=0.$$

Since 
\begin{eqnarray*}
a_2a_{-2}&=&f_1(a_1a_{-3})\\
&=&-\alpha p_{3,-1}+(1-\beta)p_{2,0}+\alpha p_1+\eta(a_2+a_{-2})-(2\eta-1)\beta a_2,
\end{eqnarray*}
$\alpha(p_{3,1}-p_{3,-1})=(2\eta-1)\beta(a_2-a_{-2})$ by the $\tau_0$-invariance.
Then $(2\eta-1)\beta(a_3-a_{-3}+a_2-a_{-2}+a_1-a_{-1})=0$ and hence $\beta=0$ or $\alpha=\beta=1$.

If $\alpha=\beta=1$, then $a_4-a_{-4}=a_2-a_{-2}$. 
By considering $\langle a_2,a_{-2} \rangle_{alg}$, this case is not apropriate.   
So $\beta=0$ and then $\alpha(p_{3,1}-p_{3,-1})=0$.

Since 
\begin{eqnarray*}
a_3a_{-2}&=&f_1(a_2a_{-3})\\
&=&\alpha^2p_{3,0}-\alpha p_{2,1}+(-\alpha^2+1)p_1+\eta(a_3+a_{-2})-(2\eta-1)\alpha a_3,
\end{eqnarray*}
$\alpha(p_{2,1}-p_{2,0})+(2\eta-1)\alpha(a_3-_{-2})=0$.
By Lemma \ref{leminv1} (2), $\alpha=0$ or 1.

If $\alpha=1$, then $p_{3,1}=p_{3,0}$ and $p_{2,1}=p_{2,0}-(2\eta-1)(a_3-a_{-2})$.
So
\begin{eqnarray*}
p_{3,0}\in\mathbb{F}a_0+p_{2,0}-\eta(a_3-a_{-2})-\frac{4\lambda_1-3\eta}{2\eta}(2p_1+\eta(a_1+a_{-1}))
\end{eqnarray*}
and by the $\theta$-invariance, this can not hold.

Thus we may assume that $a_3=a_{-3}$.
So $a_0a_3=0$ or $\lambda_3=\frac{\eta}{2}$ by the classification of axial algebras of Jordan type.

Assume that $a_0a_3=0$.
Then
\begin{eqnarray*}
a_3(p_1-(\lambda_1-\eta)a_0+\frac{\eta}{2}(a_1+a_{-1}))=&&\eta p_{2,1}+(1-\eta)(\lambda_1-\eta)a_3+\frac{\eta^2}{2}(a_1+a_{-1})\\
&&\in M^2(a_0).
\end{eqnarray*}
Thus there exist $\gamma,\delta\in\mathbb{F}$ such that
$$p_{2,1}=\delta p_1+\frac{(\eta-1)(\lambda_1-\eta)}{\eta}a_3+\frac{(2\eta-1)(4\lambda_1-3\eta)}{2\eta}(a_1+a_{-1})+\gamma a_0.$$
By the invariance, $\frac{(2\eta-1)(4\lambda_1-3\eta)}{2\eta}=\frac{(\eta-1)(\lambda_1-\eta)}{\eta}$ and $\gamma=0$.

By considering $p_{2,0}p_{2,1}$, $a_0(a_2+a_{-2}+a_0)=\in\mathbb{F}a_0$.
Thus $p_{2,0}=\frac{-\eta}{2}(a_0+a_{-2}+a_2)$.
Then $\eta=-1\pm\sqrt{2}$ and the fusion rule holds.

Hence we may assume $\lambda_3=\frac{\eta}{2}$.

Since
\begin{eqnarray*}
M^0(a_0)\oplus\mathbb{F}a_0\ni(p_{3,0}&&+\frac{\eta}{2}a_0+\eta a_3)(p_1-(\lambda_1-\eta)a_0+\frac{\eta}{2}(a_1+a_{-1}))\\
\in M^0&&(a_0)\oplus\mathbb{F}a_0+\frac{-\eta(2\eta-1)(4\lambda_1-3\eta)}{2}a_3\\
+&&(\frac{-\eta(\mu-\eta\lambda_2+2\eta^2)}{2}-\frac{\eta(2\eta-1)(4\lambda-3\eta)}{4})(a_2+a_{-2})\\
+&&(\frac{\eta\nu}{2}-\frac{\eta(2\eta-1)(2\lambda_1-\eta)}{4}+\frac{3\eta^3}{4}\\
&&-\frac{(\eta^2-(2\eta-1)(2\lambda-\eta))(2\eta-1)(4\lambda_1-3\eta)}{2\eta}-\frac{\eta(\mu-\eta\lambda_2+2\eta^2)}{2})(a_1+a_{-1}),
\end{eqnarray*}
one of the following statements holds:
\begin{itemize}
\item[(1)]$\lambda_1=\frac{3\eta}{4}$, $\mu=\eta\lambda_2-2\eta^2$ and $\nu=\frac{\eta(2\eta-1)}{4}$.
\item[(2)]$\lambda_1=\frac{3\eta}{4}$, $\mu\neq\eta\lambda_2-\eta^2$ and there exist $\delta,\epsilon\in\mathbb{F}$ such that
$$p_{2,0}=-\frac{\eta}{2}(a_2+a_{-2})+\delta(p_1+\eta(a_1+a_{-1}))+\epsilon a_0.$$
\item[(3)]$\lambda_1\neq\frac{3\eta}{4}$ and there exist $\gamma,\delta,\epsilon\in\mathbb{F}$ such that
$$p_{3,0}=-\eta a_3+\delta(p_{2,0}+\frac{\eta}{2}(a_2+a_{-2}))+\epsilon(p_1+\frac{\eta}{2}(a_1+a_{-1}))+\gamma a_0.$$
\end{itemize}

Assume $\lambda_1=\frac{3\eta}{4}$.

If $\algenei{2}$ is of Jordan type, then $p_{2,0}=\frac{-\eta}{2}(a_2+a_{-2}+a_0)$.
Since $a_0p_{2,1}=\mu a_0$, $p_{3,0}=-2p_1-\eta a_3-\eta(a_1+a_{-1})-(\frac{3\eta^2}{2}+\mu)a_0$.
However, by considering the $\theta$-invariance of $p_{3,-1}$, $\adim\leq5$ and thus contradicts the assumption.

Thus we may assume $\lambda_2=\frac{3\eta}{4}$.
If $\mu\neq\eta\lambda_2-2\eta^2$, then $p_{2,0}=\frac{-\eta}{2}(a_2+a_{-2}+a_0)$ by the invariance.
So we may assume that $\mu=\eta\lambda_2-2\eta^2=\frac{-5\eta^2}{4}$.
By Corollary \ref{corlambda2}, $p_{2,0}=p_{2,1}$ or $\chf{\mathbb{F}}=5$.
If $p_{2,0}=p_{2,1}$, then $3\eta^2+\eta=5\eta^2$ and then $\eta=\frac{1}{2}$.

So $\chf{\mathbb{F}}=5$. 
Then we can show that $p_{2,0}p_{3,0}=\frac{-\eta^2(2\eta-1)}{4}a_0$ by a similar calculation as Proposition \ref{propmulti} (3).
Thus 
$$\frac{\eta(2\eta-1)}{4}p_{3,0}=(p_{2,1}+p_{2,0})p_{3,0}=\frac{-\eta^2(2\eta-1)}{4}(a_0+a_{3}).$$
Then $a_0a_3=0$ and this contradicts the assumption.

So we may assume that $\lambda_1\neq\frac{3\eta}{4}$ and there exist $\gamma,\delta,\epsilon\in\mathbb{F}$ such that
$$p_{3,0}=-\eta a_3+\delta(p_{2,0}+\frac{\eta}{2}(a_2+a_{-2}))+\epsilon(p_1+\frac{\eta}{2}(a_1+a_{-1}))+\gamma a_0.$$

By the $f_3$-invariance,
$$\delta(p_{2,1}-p_{2,0})=-(\gamma+\eta)(a_3-a_0)+\frac{\eta(\delta-\epsilon)}{2}(a_2+a_{-2}-a_1-a_{-1}).$$

If $\delta=0$, then $\gamma=-\eta$ and $\epsilon=0$.
Thus $a_0a_3=0$ and this contradicts the assumption.
So we may assume that $\delta\neq0$.

If $\algenei{2}$ is of Jordan type, then $\delta=0$ by considering $a_0p_{2,1}$, so $\lambda_2=\frac{3\eta}{4}$.

Since $p_{2,0}\neq p_{2,1}$, in this case,
$$a_3=\frac{-\delta}{\gamma+\eta}(p_{2,1}-p_{2,0})+a_{2}+a_{-2}-a_{1}-a_{-1}+a_0.$$

Since 
\begin{eqnarray*}
M^2(a_0)\ni(p_1&&-(\lambda_1-\eta)a_0+\frac{\eta}{2}(a_1+a_{-1}))\\
&&(p_{2,1}-\frac{(2\eta-1)(4\lambda_1-3\eta)}{2\eta}(a_1+a_{-1})+(\mu-(2\eta-1)(4\lambda_1-3\eta))a_0)\\
\in&&M^2(a_0)+\frac{-2(5\eta-3)\lambda_1+6\eta^2-5\eta}{4}p_{2,1}\\
-&&(\frac{(2\eta-1)(4\lambda_1-3\eta)((1-\eta)\lambda_1-\eta)}{2\eta}\\
&&+\frac{(2\eta-1)(2\eta+1)(4\lambda-3\eta)}{4}+\frac{\eta^3(3\eta+1)}{8})(a_1+a_{-1})\\
+&&(\lambda_1-\eta)(-\mu+(2\eta-1)(4\lambda-3\eta))a_0,
\end{eqnarray*}
$2(5\eta-3)\lambda_1-6\eta^2+5\eta=0$ or $p_{2,1}\in\mathbb{F}p_1+\mathbb{F}(a_1+a_{-1})+\mathbb{F}a_0$.
If $p_{2,1}\in\mathbb{F}p_1+\mathbb{F}(a_1+a_{-1})+\mathbb{F}a_0$, then $p_{2,1}\in\mathbb{F}p_1$ by the invariance, so $\lambda_1=\frac{3\eta}{4}$.
Thus $2(5\eta-3)\lambda_1-6\eta^2+5\eta=0$.

Then, by considering the coefficient of $a_1+a_{-1}$ and $a_0$, $\mu=(2\eta-1)(4\lambda-3\eta)$, $\eta=\frac{4}{9}$ or $\frac{-1}{3}$ and $\chf{\mathbb{F}}\neq3$.

If $\chf{\mathbb{F}}\neq7$, then $(\eta,\lambda_1)=(\frac{4}{9},\frac{2}{3})$ or $(\frac{-1}{3},\frac{-1}{4})$ since $2(5\eta-3)\lambda_1-6\eta^2+5\eta=0$.
However, $\lambda_1=\frac{3\eta}{4}$ if $\eta=\frac{-1}{3}$ and thus $\eta=\frac{4}{9}$.

If $\chf{\mathbb{F}}=7$, $\frac{4}{9}=\frac{-1}{3}=2$.
Then by considering $a_0(p_{3,1}+p_{3,-1})$ and $a_0p_{3,0}$, $\lambda_1=\frac{2}{3}$.

So $(\eta,\lambda_1)=(\frac{4}{9},\frac{2}{3})$.
Then $M$ is isomorphic to $\rmsix_2(\frac{4}{9},\frac{4}{9})$. 

\subsection{Proof of Step \ref{stadim7}}

Assume that $\adim=7$.
Then $p_{3,1}\neq p_{3,0}$.

Assume that $M$ satisfies an even relation.
Then there exist $\alpha,\beta,\gamma\in\mathbb{F}$ such that
$$a_4+a_{-3}+\alpha(a_3+a_{-2})+\beta(a_2+a_{-1})+\gamma(a_1+a_0)=0.$$

Since 
\begin{eqnarray*}
a_2a_{-2}=&&f_{-2}(a_0a_4)\\
=&&-(\alpha+1)p_{3,1}-(\alpha+\beta)p_{2,0}-(\beta+\gamma)p_1+\eta(a_2+a_{-2})-(2(\alpha+\beta+1)\eta+\gamma)a_{-2},
\end{eqnarray*}
$(\alpha+1)(p_{3,1}-p_{3,-1})=(2(\alpha+\beta+1)\eta+\gamma)(a_2-a_{-2})$.
Then by a similar calculation as Lemma \ref{leminv1}, $2(\alpha+\beta+1)\eta+\gamma=0$.
Since $p_{3,1}\neq p_{3,0}$, $\alpha=-1$.

Since 
\begin{eqnarray*}
a_{-2}a_3&=&f_{-1}(a_{-1}a_4)\\
&=&-\beta p_{3,1}+(\beta-\gamma)p_{2,0}-(\gamma-1)p_1+\eta(a_{-2}+a_3)-(2\gamma\eta+\beta)a_{-2},
\end{eqnarray*}
$\beta(p_{3,1}-p_{3,0})+(\beta-\gamma)(p_{2,1}-p_{2,0})=(2\gamma\eta+\beta)(a_3-a_{-2})$.
By Lemma \ref{leminv1} (1), $2\gamma\eta+\beta=0$ and since $p_{3,1}\neq p_{3,0}$, $\beta=0$.
Then $\gamma=0$

Then
\begin{eqnarray*}
a_3a_{-3}&&=f_{-1}(a_4a_{-2})\\
&&=\eta(a_3+a_{-3})-(2\eta-1)a_{-3}.
\end{eqnarray*}
By $\tau_0$-invariance, this can not be true.

So we may assume $M$ satisfies an odd relation.
Then there exist $\alpha,\beta,\gamma\in\mathbb{F}$ such that
$$a_4-a_{-3}+\alpha(a_3-a_{-2})+\beta(a_2-a_{-1})+\gamma(a_1-a_0)=0.$$

Since 
\begin{eqnarray*}
a_2a_{-2}=&&f_{-2}(a_0a_4)\\
=&&(1-\alpha)p_{3,1}+(\alpha-\beta)p_{2,0}+(\beta-\gamma)p_1+\eta(a_2+a_{-2})-(2\eta-1)\gamma a_{-2},
\end{eqnarray*}
$(\alpha-1)(p_{3,1}-p_{3,-1})=(2\eta-1)\gamma(a_2-a_{-2})$.
Then by similar calculation as Lemma \ref{leminv1}, $\gamma=0$.
Since $p_{3,1}\neq p_{3,0}$, $\alpha=1$.

Since 
\begin{eqnarray*}
a_{-2}a_3&=&f_{-1}(a_{-1}a_4)\\
&=&-\beta p_{3,1}+\beta p_{2,0}+(1-\beta)p_1+\eta(a_{-2}+a_3)-(2\eta-1)\beta a_{-2},
\end{eqnarray*}
$\beta(p_{3,1}-p_{3,0})+\beta(p_{2,1}-p_{2,0})=(2\eta-1)\beta(a_3-a_{-2})$.
By Lemma \ref{leminv1} (1), $\beta=0$ or 1.

Then
\begin{eqnarray*}
a_3a_{-3}&&=f_{-1}(a_4a_{-2})\\
&&=\beta p_{3,1}+(\beta^2-2\beta)p_{2,0}-(\beta^2-2\beta)p_1+\eta(a_3+a_{-3})+(2\eta-1)(\beta-1)a_{-3}.
\end{eqnarray*}
By $\tau_0$-invariance, $\beta=1$.

Then $p_{2,0}=p_{2,1}$ and $p_{3,1}=p_{3,0}+(2\eta-1)(a_3-a_{-2})$.
Hence $\lambda_1=\lambda_2=\frac{3\eta}{4}$ since $a_0(p_{2,1}-p_{2,0})=0$ and $(p_{2,0}-(\lambda_2-\eta)a_0+\frac{\eta}{2}(a_2+a_{-2}))^2\in\mathbb{F}a_0\oplus M^0(a_0)$.
By considering $a_0(p_{3,1}+p_{3,-1}-2p_{3,0})$., $\eta=\frac{4}{3}$ and 
$$p_{3,0}=\frac{1}{2}p_{2,0}-\frac{1}{2}p_1-\frac{2}{3}(a_3+a_{-3})+\frac{1}{3}(a_2+a_{-2}-a_1-a_{-1}-a_0).$$
Then $2(a_3+a_{-3})-a_2-a_{-2}+a_1+a_{-1}-4a_0\in M^0(a_0)$ and By the fusion rule, $p_1=p_{2,0}$.
Then the fusion rule holds.

\subsection{Proof of Step \ref{stadim8}}

Assume that $\adim=8$.
Then $M$ satisfies an odd relation and $p_{3,1}\neq p_{3,0}$.
So there exist $\alpha,\beta,\gamma\in\mathbb{M}$ auch that
$$a_4-a_{-4}+\alpha(a_3-a_{-3})+\beta(a_2-a_{-2})+\gamma(a_1-a_{-1})=0.$$

Then
\begin{eqnarray*}
a_3a_{-2}=&&f_2(a_1a_{-4})\\
=&&-\alpha p_{4,-1}+(\beta-1)p_{3,0}+(\alpha-\gamma)p_{2,1}+\beta p_1+\eta(a_3+a_{-2})-(2\eta-1)\gamma a_3
\end{eqnarray*}
and
\begin{eqnarray*}
a_3a_{-3}&=&f_1(a_2a_{-4})\\
&=&(\alpha^2-\beta)p_{4,-1}+(\alpha\beta-\alpha-\gamma)p_{3,0}+(-\alpha^2+\alpha\gamma+1)p_{2,1}\\
&&+(-\alpha\beta+\alpha+\gamma)p_1+\eta(a_3+a_{-3})-(2\eta-1)(\alpha\gamma-\beta)a_2.
\end{eqnarray*}
By the invariance of $a_3a_{-3}$, $(2\eta-1)(\alpha\gamma-\beta)(a_4-a_{-4}+a_2-a_{-2})$ and thus $\alpha\gamma-\beta=0$ or $(\alpha,\beta,\gamma)=(0,1,0)$.
But $(\alpha,\beta,\gamma)\neq(0,1,0)$ by considering the structure of $\algenei{2}$.

By the ivariance of $a_3a_{-2}$, 
$$\alpha(p_{4,2}-p_{4,-1})+(\beta-1)(p_{3,1}-p_{3,0})+(\alpha-\gamma)(p_{2,1}-p_{2,0})=(2\eta-1)\gamma(a_3-a_{-2}),$$
$$\alpha(p_{4,1}-p_{4,-1})+(\beta-1)(p_{3,1}-p_{3,-1})=(2\eta-1)\gamma(a_3-a_{-3}+a_2-a_{-2}),$$
$$(\beta-1)(p_{3,1}-p_{2,-1})=(2\eta-1)\gamma((\alpha-1)(a_3-a_{-3})+(\beta-1)(a_2-a_{-2})+(\gamma-1)(a_1-a_{-1}))$$
and
$$0=(2\eta-1)\gamma((\alpha-1)(a_4-a_{-4})+(\alpha+\beta-2)(a_3-a_{-3})+(\alpha+\beta+\gamma-3)(a_2-a_{-2})+(\beta+\gamma-2)(a_1-a_{-1})).$$
Thus $(\alpha,\beta,\gamma)=(\alpha,0,0),(1,1,1)$ or $(2,2,1)$.

If $\beta=\gamma=0$, then $p_{3,1}=p_{3,0}$.

If $\alpha=\beta=\gamma=1$, then $a_4a_{-3}=f_1(a_3a_{-4})=p_{3,1}+\eta(a_4+a_{-3})$.
So $p_{3,1}=p_{3,-1}$.

Thus we may assume $(\alpha,\beta,\gamma)=(2,2,1)$.
Then $a_4a_{-3}=-p_{4,0}+2p_{2,0}+p_1+\eta(a_4+a_{-3})$.
So
$$p_{4,1}-p_{4,0}=\frac{2(2\eta-1)}{3}(a_4-a_{-3}+a_3-a_{-2}+a_2-a_{-1}),$$
$$p_{3,1}-p_{3,0}=(2\eta-1)(a_4+2a_3-a_{-3}+a_2-2a_{-2}-a_{-1})$$
and
$$p_{2,1}-p_{2,0}=\frac{2\eta-1}{3}(a_4-a_{-3}+a_3-a_{-2}+a_2-a_{-1}).$$
However, there did not exist $\eta\in\mathbb{F}$ such that the multiplications $a_0p_{2,1}$ and $a_0(p_{3,1}+p_{3,-1})$ coincide with the resullts of Section 2.

\section*{Acknowledgments}
The author is grateful to Prof.\ Atsushi Matsuo for guidance throughout the work.

\end{document}